\newcommand{\prob}{\stackrel{P}{\longrightarrow}}
\newcommand{\one}{{\bf 1}}
\newcommand{\reals}{{\mathbb R}}
\newcommand{\bbr}{\reals}
\newcommand{\vep}{\varepsilon}
\newcommand{\bbc}{\protect{\mathbb C}}
\newcommand{\ams}{\cite{adler:moldavskaya:samorodnitsky:2014}}
\newcommand{\be}{{\bf e}}
\newcommand{\BX}{{\bf X}}
\newcommand{\BY}{{\bf Y}}
\newcommand{\BZ}{{\bf Z}}
\newtheorem{theorem}{Theorem}[section]
\newtheorem{prop}{Proposition}[section]
\newtheorem{lemma}{Lemma}[section]
\newtheorem{ex}{Example}[section]
\newtheorem{remark}{Remark}
\newtheorem{property}{Property}
   \newtheoremstyle{example}{\topsep}{\topsep}%
     {}
     {}
     {\bfseries}
     {}
     {\newline}
     {\thmname{#1}\thmnumber{ #2}\thmnote{ #3}}
   \theoremstyle{example}
\def\Cov{{\rm Cov}}
\def\Var{{\rm Var}}
\def\E{{\rm E}}
\def\P{{\rm P}}
\def\by{{\bf y}}
\numberwithin{equation}{section}
\begin{document}

\title[Asymptotic behaviour of Gaussian minima]
{Asymptotic behaviour of high Gaussian minima}

\author{Arijit Chakrabarty}
\address{Theoretical Statistics and Mathematics Unit \\
Indian Statistical Institute \\
203 B.T. Road\\
Kolkata 700108, India}
\email{arijit.isi@gmail.com}

\author{Gennady Samorodnitsky}
\address{School of Operations Research and Information Engineering\\
and Department of Statistical Science \\
Cornell University \\
Ithaca, NY 14853}
\email{gs18@cornell.edu}

\thanks{Chakrabarty's research was partially supported by the INSPIRE grant of the Department of Science and Technology, Government of India.
Samorodnitsky's research   was partially supported by the ARO
grant  W911NF-12-10385 and by the NSF grant DMS-1506783
at Cornell University.}

\subjclass{Primary 60G15, 60F10.  Secondary 60G70.}
\keywords{ Gaussian process, high excursions, minima, precise asymptotics
\vspace{.5ex}}

\begin{abstract}
We  investigate what happens when an entire sample path of a smooth
Gaussian process on a compact interval lies above a high
level. Specifically, we determine the precise asymptotic probability
of such an event, the extent to which the high level is exceeded, 
the conditional shape of the process above the high level, and the
location of the minimum of the process given that the sample path is
above a high level.  
\end{abstract}

\maketitle

\section{Introduction}
\label{sec:intro}

Extremal behaviour of Gaussian processes has been the subject of
numerous studies. It is of interest from the point of view of the
extreme value theory, or large deviations theory and of the theory of
sample path properties of stochastic processes. The asymptotic
distribution of 
the supremum of bounded Gaussian processes has been very thoroughly
studied; highlights include \cite{dudley:1973},
\cite{berman:kono:1989} and \cite{talagrand:1987}, and the books of
\cite{piterbarg:1996}, \cite{adler:taylor:2007} and
\cite{azais:wschebor:2009}. In this paper we are interested in another
type of the asymptotic behaviour of Gaussian processes: the situation
when an entire sample path of the process is above a high level. Such
situations are important for understanding the structure of the high
level excursion sets of Gaussian random processes and fields. 

Very loosely speaking,  we are interested in the asymptotics of the
Gaussian minima when these minima are high. Dealing with high Gaussian
minima is not easy. A finite-dimensional situation (in the language of
dependent lognormal random variables) is considered in
\cite{guliashvili:tankov:2016}. We, on the other hand, consider minima
of zero mean sample continuous Gaussian
 processes. The processes we consider are often stationary, but some
 nonstationary processes fall within our framework as well. 

We now describe the questions of interest to us more concretely. 
Let $\BX:=(X_t:t\in\bbr)$ be a centered Gaussian
 process with continuous paths, defined on  some probability
 space $(\Omega,{\mathcal F},\P)$. 
Let $[a,b]$ be a compact interval, and let $u>0$ be a high level.  We
study a number of problems related to the situation described above,
i.e. the situation when the entire 
sample path $(X_t:t\in [a,b])$ lies above the level $u$. Specifically,
we are interested in the following questions. 

{\bf Question 1}.  What is the precise asymptotic behaviour of the
probability 
\[
\P\left(\min_{a\le t\le b}X_t>u\right)
\]
as $u\to\infty$ ?

{\bf Question 2}.    Given the event
\begin{equation} \label{e:Bu}
B_u:=\left\{\min_{a\le t\le b}X_t>u\right\}\,,
\end{equation}
how does the conditional distribution of $(X_t:t\in [a,b])$ behave as $u\to\infty$ ?

{\bf Question 3}.  Conditionally on $B_u$, what can be said about the
asymptotics of the overshoot 
\[
\min_{a\le t\le b}X_t-u\,,
\]
as $u\to\infty$ ?

{\bf Question 4}.  Consider the location of the minimum of the
process, 
\[
\arg\min_{a\le t\le b}X_t
\]
taken to be the leftmost location of the minimum in case there are
ties (it is elementary that this location is a well defined random
variable). What is the asymptotic distribution of the location of the
minimum given $B_u$, as $u\to\infty$ ?

Some information on Questions 1 and  2 is contained in
\ams~. Regarding {\bf Question 1}, the latter 
  paper describes the
probabilities of the type $\P\left(\min_{a\le t\le b}X_t>u\right)$ 
   on the logarithmic level, while in the present paper we are 
interested in precise asymptotics of that probability. Regarding {\bf
  Question 2}, 
the latter paper studies the asymptotic behaviour of the ratio 
$$
\frac1u X_t, \, a\le t\le b
$$
given $B_u$, as $u\to\infty$, while in the present paper we would like
to know the deviations of the sample path from this linear in $u$
behaviour.  Furthermore, the paper of \ams\ provides no information on
{\bf Question 3} and {\bf Question 4} above.  

For stationary (not necessarily Gaussian) processes a general theory
of the location of the supremum (or infimum) of the process is
developed in \cite{samorodnitsky:shen:2013}. However, the limiting
behaviour of the minimum location in {\bf Question 4} even in the
stationary case is outside of that theory.

We obtain fairly precise answers to the above questions. However, in
order to achieve this level of precision, we will impose much stricter
smoothness assumptions on the process $\BX$ then those imposed in
\ams. We describe the precise assumptions on the process in Section
\ref{sec:smoothness}. Section \ref{sec:preliminary} contains
preliminary results, while the main results of the paper with answers
to Questions 1-4 are stated in Section \ref{sec:results}. Section
\ref{sec:examples} presents two examples illustrating the main results
of the paper. The results stated in Section \ref{sec:results} are proved in Section \ref{sec:proofs}.

\section{Assumptions on the process $\BX$}
\label{sec:smoothness}

In this section we will state and discuss the assumptions
on the Gaussian process $\BX$ we will use in the rest of the
paper. Among others, these assumptions will guarantee that our process
is very smooth. 
Our main interest lies in stationary Gaussian processes, and
for these processes the assumptions are easy to
state. However, our main results in the subsequent sections do not
depend on the stationarity of the process. Rather, they depend on
certain properties of the process which follow, in the stationary case,
from a small number of basic assumptions. These properties are
discussed in the remainder of this section. 

We use the notation 
\[
R(s,t):=\E(X_sX_t)\,,s,t\in\bbr
\]
for the  covariance function of the process $\BX$. If the process is
stationary, then its covariance function is related to the spectral
measure of the process by writing (with the usual abuse of notation
related to the dual use of $R$ to denote both a function of one
variable and a function of two variables)
\[
R(s,t) = R(t-s) =\int_{-\infty}^\infty e^{\iota(t-s)x}F_X(dx)\,,s,t\in\bbr\,,
\]
where $\iota:=\sqrt{-1}$.
Recall that the spectral measure $F_X$ of the process $\BX$ is a
finite symmetric Borel measure on $\bbr$. 

When the process $\BX$ is stationary, we will impose the following
conditions on the spectral measure $F_X$. 
\begin{itemize}
\item[S1.] For all $t\in\bbr$,
\begin{equation}
\label{p0.eq1}
\int_{-\infty}^\infty e^{tx}F_X(dx)<\infty\,.
\end{equation}
\item[S2.] The support of $F_X$  has at least one accumulation point.
\end{itemize}

The canonical example of such a Gaussian process is the process with
the Gaussian spectral density
\begin{equation} \label{e:Gauss.sp.dens}
F(dx) = \frac1{\sqrt{2\pi}}e^{-x^2/2}\, dx, \ x\in\bbr\,.
\end{equation}
This process was considered in detail in \ams, and we will use it in
this paper to illustrate our results. 

The following proposition establishes certain consequences of the
conditions S1 and S2 in the case of a stationary process. It is these
consequences, rather than stationarity itself, that will be used in
much of the paper.
\begin{prop}\label{p0}
Let $\BX$ be a stationary Gaussian process whose spectral measure
satisfies S1 and S2. Then the process $\BX$ has the following
properties. 
\begin{property}   \label{assume1} The function $R(\cdot,\cdot)$ has a
  power expansion 
\[
R(s,t)=\sum_{m=0}^\infty\sum_{n=0}^\infty r_{mn}s^mt^n\,,s,t\in\bbr\,,
\]
for some $(r_{mn}:m,n\ge0)\subset\bbr$.
\end{property}

\begin{property}     \label{assume2} 
For any compact interval $[a,b]$, the family $(X_t:t\in[a,b])$ is non-negatively
  non-degenerate. That is, for any probability measure $\nu$ on
  $[a,b]$, 
\[
\Var\left(\int_a^bX_t\nu(dt)\right)=\int_a^b\int_a^b R(s,t)\nu(ds)\nu(dt)>0\,.
\]
\end{property}

\begin{property}    \label{assume3} 
The sample paths of $\BX$ are infinitely differentiable, and 
  the covariance matrix of any finite
  sub-collection  of the family
  $(X_t^{(n)}:t\in\bbr,\,n=0,1,\ldots)$  is non-singular. Here and
  elsewhere, for any function 
  $f$ and $n\ge0$, $f^{(n)}$ denotes its $n$-th derivative whenever it
  exists, with $f^{(0)}=f$. 
\end{property}
\end{prop}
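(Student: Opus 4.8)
The plan is to derive the three properties from S1 and S2 by working with the spectral representation of the covariance function. The exponential integrability condition S1 is the engine for Properties 1 and 3: it lets us move derivatives and infinite series inside the spectral integral, while S2 supplies the non-degeneracy needed in Properties 2 and 3.

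For Property 1, I would start from $R(s,t)=\int_{-\infty}^\infty e^{\iota(t-s)x}F_X(dx)$ and expand the exponential as a power series in $(t-s)x$. The key point is that S1 with $t$ replaced by any real argument guarantees that $\int_{-\infty}^\infty e^{c|x|}F_X(dx)<\infty$ for every $c>0$ (split the integral over $\{x\ge 0\}$ and $\{x<0\}$ and use S1 at $\pm c$). This uniform exponential integrability dominates the partial sums of $\sum_k (\iota(t-s)x)^k/k!$ on any compact set of $(s,t)$, so by dominated convergence I may interchange sum and integral to write $R(s,t)=\sum_{k=0}^\infty \frac{(\iota(t-s))^k}{k!}\mu_k$, where $\mu_k=\int x^k\,F_X(dx)$ is the $k$-th spectral moment (finite by S1). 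Expanding $(t-s)^k$ by the binomial theorem and collecting terms gives a double power series $\sum_{m,n} r_{mn}s^m t^n$ with $r_{mn}$ explicit in the $\mu_k$; the absolute convergence of the rearrangement is again justified by the exponential moment bound.

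For Property 3, smoothness is handled by differentiating under the spectral integral: because $\int |x|^n e^{\iota(t-s)x}\,F_X(dx)$ is finite and continuous for every $n$ (again by S1), the covariance $R$ is $C^\infty$ in each argument, and standard criteria for mean-square differentiability of Gaussian processes then upgrade this to almost sure infinite differentiability of the sample paths, with $\Cov(X_s^{(m)},X_t^{(n)})=\partial_s^m\partial_t^n R(s,t)$. The nonsingularity of the covariance matrix of any finite sub-collection of the $X_t^{(n)}$ is where S2 enters decisively. A singular covariance matrix would produce a nontrivial linear combination $\sum_j c_j X_{t_j}^{(n_j)}$ that vanishes almost surely; writing this combination via the spectral representation yields $\sum_j c_j (\iota x)^{n_j} e_{}^{\iota t_j x}=0$ for $F_X$-almost every $x$. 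The left-hand side is an entire function of $x$ (a finite sum of polynomials times exponentials), and such a function has only isolated zeros unless it is identically zero; since S2 forces $F_X$ to charge a set with an accumulation point, the entire function must vanish on a set with an accumulation point, hence vanish identically, which forces all $c_j=0$. This contradiction gives nonsingularity. Property 2 is the special case of this argument applied to the single integrated functional, or can be read off directly: $\Var(\int_a^b X_t\,\nu(dt))=\int |\int_a^b e^{\iota tx}\nu(dt)|^2 F_X(dx)$, and this vanishes only if the Fourier transform $\int_a^b e^{\iota tx}\nu(dt)$ is zero on $\mathrm{supp}(F_X)$; being entire in $x$ and vanishing on a set with an accumulation point, it is identically zero, which is impossible for a probability measure $\nu$ (evaluate at $x=0$).

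The main obstacle is the identity-theorem argument underlying Properties 2 and 3: one must verify carefully that the relevant functions of the spectral variable $x$ are genuinely entire (finite linear combinations of $x^{n_j}e^{\iota t_j x}$ are, but one should confirm the functional in Property 2 extends to an entire function, which follows from S1 giving a region of convergence) and that S2's single accumulation point really suffices to invoke the identity theorem for analytic functions. The interchange-of-limits steps in Properties 1 and 3 are routine once the uniform exponential moment bound from S1 is in hand, so I expect the analytic-continuation/identity-theorem step to be where the real content lies.
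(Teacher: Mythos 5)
Your proposal is correct, and for Property 2 it coincides with the paper's proof verbatim in structure: write the variance as $\int\bigl|\int_a^b e^{\iota tx}\nu(dt)\bigr|^2F_X(dx)$, note the inner transform is entire, invoke S2 and the identity theorem, and derive a contradiction at $x=0$. The differences lie in Properties 1 and 3. For Property 1 the paper simply observes that $\tilde R(z)=\int e^{\iota zx}F_X(dx)$ is analytic on $\bbc$ (S1 makes all the differentiated integrals converge) and sets $R(s,t)=\tilde R(t-s)$; your explicit power-series expansion with the bound $\int e^{c|x|}F_X(dx)<\infty$ is a hands-on rendering of the same fact, with the rearrangement correctly dominated. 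For Property 3 the paper uses a different device: it builds a version of the process as a \emph{random analytic function} $Y(z)=\int e^{\iota zx}Z(dx)$, $z\in\bbc$, with $Z$ a Gaussian random measure controlled by $F_X$, so that $C^\infty$ (indeed analytic) sample paths come for free, and then cites Exercise 3.5 of Aza\"is--Wschebor for nonsingularity. You instead go through mean-square differentiability, and here your phrasing glosses the one step that needs care: mean-square differentiability alone does not give almost sure sample-path differentiability; you need, say, Kolmogorov's continuity criterion applied to each derivative process, which does work here because the covariances $\partial_s^m\partial_t^nR$ are analytic, hence locally Lipschitz, giving $\E\bigl(X_t^{(n)}-X_s^{(n)}\bigr)^2\le C|t-s|^2$ on compacts. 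This is routine under S1, so it is a gloss rather than a gap, but the paper's random-analytic-function construction sidesteps it entirely. On the other hand, your direct proof of nonsingularity --- a vanishing variance forces the exponential polynomial $\sum_j c_j(\iota x)^{n_j}e^{\iota t_jx}$ to vanish $F_X$-a.e., hence on the support by continuity, hence identically by S2 and the identity theorem, hence $c_j=0$ by linear independence of exponential polynomials with distinct frequencies --- is a self-contained replacement for the paper's citation, and is essentially the proof of the cited exercise; that is a modest gain in transparency at the cost of length.
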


\begin{proof}
The integral
$$
\tilde R(z):=\int_\bbr e^{\iota zx}\, F_X(dx)\,,z\in\bbc\,,
$$
defines, clearly, an analytic function, and then $R(s,t)=\tilde
R(t-s)$ for $s,t\in\bbr$ has Property  \ref{assume1}. 

To check Property  \ref{assume2}, suppose for the sake of
contradiction, that there exists a probability measure $\nu$ on 
$[a,b]$ such that 
\[
\int_a^b\int_a^b R(s,t)\nu(ds)\nu(dt)=0\,.
\]
Clearly, the left hand side is the same as
\[
\int_{-\infty}^\infty\left|\int_a^b e^{\iota tx}\nu(dt)\right|^2F_X(dx)\,.
\]
Therefore, it follows that
\[
\int_a^b e^{\iota tx}\nu(dt)=0\,,
\]
for every $x$ in the support of $F_X$. The left hand side is an
analytic function of the complex variable $x$. Since the equality
holds on the support of $F_X$, which has an accumulation point, by the
assumption S2, the equality holds for all $x\in\bbc$ including
$x=0$. This contradicts the fact that $\nu$ is a probability
measure and thus verifies Property \ref{assume2}.

For Property \ref{assume3} notice that the integral 
\begin{equation}
\label{p0.eq2}Y(z):=\int_\bbr e^{\iota zx}Z(dx)\,,z\in\bbc\,,
\end{equation}
where $Z$ is a complex-valued Gaussian random measure with control
measure $F_X$, has a version that is a random analytic function. Indeed, by \eqref{p0.eq1} all the integrals of the type
$$
\int_\bbr x e^{\iota zx}Z(dx)
$$
are well defined complex-valued Gaussian random variables for any $z\in\bbc$. 
This gives a version of the random function \eqref{p0.eq2} that satisfies the Cauchy-Riemann conditions. Since the  restriction of this random function to 
$z\in\bbr$ coincides distributionally with $\BX$, there is a version
of $\BX$ whose sample paths   are infinitely differentiable. Now
Property \ref{assume3} follows from Exercise 3.5 in
\cite{azais:wschebor:2009}. 

This completes the proof of the proposition.
\end{proof}

\section{Preliminary results}
\label{sec:preliminary}

We fix an interval $[a,b]$, once and for all, where $-\infty<a<b<\infty$, and proceed with a number of preliminary results that are important for
the  main results in the subsequent sections. Most of these results
address several issues related to the optimization problem 
\begin{equation}
\label{p1.eq1}
\min_{\nu\in M_1[a,b]}\int_a^b\int_a^bR(s,t)\nu(ds)\nu(dt)\,,
\end{equation}
where the minimum is taken 
over all Borel probability measures on $[a,b]$. The
importance of this problem to the questions studied in this paper was
shown in \ams. 

\begin{prop}
\label{p1} Let $\BX$ be a Gaussian process satisfying either (a) or (b) below.

\begin{enumerate}

\item[(a)] The process has Property 1, Property 2 and
Property 3, and additionally,  for every fixed $s\in [a,b]$,
\begin{equation}  \label{assume4} 
\limsup_{t\to\infty}R(s,t)\le0\,.
\end{equation}

\item[(b)] The process  is  stationary whose spectral measure
satisfies S1 and S2.

\end{enumerate}

Then the minimization problem \eqref{p1.eq1} has a unique
minimizer $\nu_*$. Furthermore, the support of  
$\nu_*$ has a finite cardinality and the minimum value in \eqref{p1.eq1}
is strictly positive. 
\end{prop}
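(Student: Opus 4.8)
The plan is to treat the four assertions—existence of a minimizer, strict positivity of the minimum, finiteness of the support, and uniqueness—largely separately, after first reducing case (b) to the three Properties. Under (b), Proposition \ref{p0} supplies Properties \ref{assume1}--\ref{assume3}, so I will freely use real-analyticity of $R$ (Property \ref{assume1}) and the non-degeneracy statements (Properties \ref{assume2}, \ref{assume3}); the hypothesis \eqref{assume4} will be invoked only in case (a), its stationary role being played in case (b) by the condition S2 on the support of $F_X$. Writing $Q(\nu):=\int_a^b\int_a^b R(s,t)\,\nu(ds)\nu(dt)=\E\big[(\int_a^b X_t\,\nu(dt))^2\big]$ and $B(\mu,\nu):=\int_a^b\int_a^b R\,d\mu\,d\nu$ for the associated bilinear form, existence of a minimizer comes from soft compactness: $M_1[a,b]$ is weak$^*$ compact because $[a,b]$ is compact, the product $\nu_n\otimes\nu_n$ converges weakly whenever $\nu_n$ does, and $R$ is continuous (indeed analytic) on $[a,b]^2$, so $Q$ is weak$^*$ continuous and attains its infimum at some $\nu_*$. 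Strict positivity of the minimum is then immediate from Property \ref{assume2}: the value $Q(\nu_*)=\Var(\int X\,d\nu_*)$ is the variance of a non-degenerate integral, hence $>0$.

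For the finiteness of the support I would use the first-order optimality condition. Convexity of $Q$ along the segment $(1-\lambda)\nu_*+\lambda\nu$ yields, for every $\nu\in M_1[a,b]$, the inequality $B(\nu_*,\nu)\ge B(\nu_*,\nu_*)=Q(\nu_*)=:m$. Setting $h(t):=\int_a^b R(s,t)\,\nu_*(ds)$ and taking $\nu=\delta_t$ shows $h(t)\ge m$ for all $t\in[a,b]$, while $\int_a^b h\,d\nu_*=m$; hence $\nu_*$ is supported on the set $\{t\in[a,b]:h(t)=m\}$ where the analytic function $h$ attains its minimum. The function $h$ is real-analytic on $\bbr$ (Property \ref{assume1}), and the key point is that it is not constant. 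In case (a) this follows from \eqref{assume4}: the reverse Fatou lemma gives $\limsup_{t\to\infty}h(t)\le 0$, incompatible with $h\equiv m>0$. In case (b), writing $h(t)=\int_\bbr e^{\iota tx}\big(\int_a^b e^{-\iota sx}\nu_*(ds)\big)F_X(dx)$, a constant $h$ would force the entire function $x\mapsto\int_a^b e^{-\iota sx}\nu_*(ds)$ to vanish on the support of $F_X$, which has an accumulation point by S2, contradicting its value $1$ at $x=0$. Once $h$ is non-constant, $h-m$ is a non-zero real-analytic function, so its zero set in the compact interval $[a,b]$ is discrete and therefore finite; consequently the support of $\nu_*$ is finite.

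Uniqueness I would obtain from strict convexity of $Q$ on $M_1[a,b]$. If $\nu_0,\nu_1$ are both minimizers then, comparing $Q$ at the midpoint with $m$ and using Cauchy--Schwarz, one finds $\int X\,d\nu_0=\int X\,d\nu_1$ almost surely, i.e. $B(\mu,\mu)=0$ for the signed measure $\mu:=\nu_0-\nu_1$. Everything therefore reduces to the strict positive-definiteness of the kernel: for a finite signed measure $\mu$ on $[a,b]$, $B(\mu,\mu)=0$ should force $\mu=0$. In case (b) this is the clean Fourier argument already used above: $B(\mu,\mu)=\int_\bbr|\hat\mu(x)|^2F_X(dx)=0$ makes the entire function $\hat\mu$ vanish on the support of $F_X$, hence everywhere by S2 and the identity theorem, so $\mu=0$.

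The main obstacle is exactly this strict positive-definiteness in case (a), where no spectral representation is available. Here I would argue through the sample paths, which are real-analytic because $R$ is, so that $X_{a+z}=\sum_k X_a^{(k)}z^k/k!$. The relation $B(\mu,\mu)=0$ means $\int_a^b X_t\,\mu(dt)=\sum_k (c_k/k!)X_a^{(k)}=0$ almost surely, with $c_k:=\int_a^b (t-a)^k\mu(dt)$; since a finite measure on a compact interval is determined by its moments, it suffices to deduce $c_k=0$ for all $k$ from the vanishing of this $L^2$-convergent series in the jointly Gaussian variables $X_a^{(k)}$. The non-singularity in Property \ref{assume3} gives linear independence of every finite subfamily $(X_a^{(k)})$, and the delicate part—the one I expect to require the most care—is to upgrade this to the vanishing of the full infinite combination, using the analytic growth bounds on the derivative covariances $R^{(j,k)}(a,a)$ together with the rapid decay $|c_k|\le\|\mu\|(b-a)^k$ to exclude a nontrivial null sequence. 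This analytic non-degeneracy step is the crux of the proposition; the remaining assertions follow from the soft arguments above.
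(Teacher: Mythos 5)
Everything in your proposal up to and including the finiteness of the support is correct, and your self-contained derivation of the first-order optimality condition (convexity of $Q$ giving $h\ge m$ on $[a,b]$ with $h=m$ on the support, so that the support sits inside the zero set of the non-constant analytic function $h-m$) is a nice replacement for the paper's citation of Theorem 5.1 of \ams; your non-constancy arguments in cases (a) and (b) are essentially the paper's. The genuine gap is the uniqueness step in case (a). You reduce uniqueness to strict positive-definiteness of $B$ on \emph{all} finite signed measures and then, for case (a), propose to expand $\int_a^b X_t\,\mu(dt)=\sum_k (c_k/k!)X_a^{(k)}$ and deduce $c_k=0$ for all $k$ from Property \ref{assume3}. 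This step does not follow: Property \ref{assume3} asserts non-singularity only of \emph{finite} subcollections of $(X_t^{(n)})$, which is compatible with a nontrivial $L^2$-convergent infinite combination vanishing almost surely, and the hypotheses provide no growth bounds on the derivative covariances $R^{(j,k)}(a,a)$ that would exclude this. You flag this yourself as ``the crux,'' but as stated it is an unproven (and, from Properties \ref{assume1}--\ref{assume3} plus \eqref{assume4} alone, likely unprovable) claim, so the proof of uniqueness in case (a) is incomplete.

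The paper sidesteps this entirely by ordering the steps differently: finiteness of support is established first and applies to \emph{every} minimizer (your own optimality-condition argument already does this, since it uses nothing special about a particular minimizer). Consequently, if $\nu_0\neq\nu_1$ are both optimal, the signed measure $\mu=\nu_0-\nu_1$ is supported on a \emph{finite} set $\{t_1,\ldots,t_m\}$, and then Property \ref{assume3} applies directly: the variables $X_{t_1},\ldots,X_{t_m}$ are linearly independent, so $B(\mu,\mu)=\Var\bigl(\sum_i c_iX_{t_i}\bigr)>0$; equivalently, $\alpha\mapsto Q\bigl(\alpha\nu_0+(1-\alpha)\nu_1\bigr)$ is strictly convex on $[0,1]$ and cannot attain its minimum at both endpoints. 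This one-line patch closes your gap, and it shows that full strict positive-definiteness of the kernel is never needed --- your Fourier proof of it in case (b) is valid but superfluous. (A minor point in case (b): constancy of $h$ gives $\psi(x)=\int_a^b e^{-\iota sx}\nu_*(ds)$ vanishing $F_X$-a.e.\ off the origin, whence on the support off the origin by continuity; the paper handles this via the even/odd decomposition, but your version is repairable in the same way.)
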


\begin{proof} 
We start with the proof under the assumption (a). The fact that the minimum is achieved follows from continuity of the functional being optimized in the topology of
weak convergence on $M_1[a,b]$ and compactness of that space. 
The claim that the minimum value  is positive follows from Property
\ref{assume2}. Let 
$\nu_*$ be a minimizer in the optimization problem \eqref{p1.eq1}, 
and define 
\begin{equation} \label{e:Y}
Y:=\int_a^bX_s\, \nu_*(ds)\,.
\end{equation} 
Let 
\[
\hat\mu(t):=\E\left(X_t|Y=1\right),\, t\in\bbr\,,
\]
and observe that
\begin{equation}
\label{eq.defmuhat}\hat\mu(t)=\frac{\Cov(X_t,Y)}{\Var(Y)}=
\frac{\int_a^bR(s,t)\, \nu_*(ds)}{\Var(Y)}, \ t\in\bbr\,. 
\end{equation}

By Theorem 5.1 in \ams, $\hat\mu\ge1$ on $[a,b]$, and $\hat\mu=1$ on
the support of $\nu_*$. Since $\hat\mu$ is real analytic on $\bbr$ by
Property \ref{assume1}, there are two possibilities. Either 
\begin{equation}
\label{p1.eq2}\hat\mu(t)=1\text{ for all }t\in\bbr\,,
\end{equation}
or the set
\begin{equation}\label{eq.defhats}
\hat S:=\{t\in[a,b]:\hat\mu(t)=1\}
\end{equation}
has no accumulation points and, hence, is a set of finite cardinality. In
the latter case, the support of $\nu_*$ is also a finite set. We will
show that \eqref{p1.eq2} is impossible and, hence, the latter option is
the only possible one. 

Indeed, suppose that \eqref{p1.eq2} holds. Then the function
\[
g(t):=\int_a^bR(s,t)\, \nu_*(ds)\,,t\in\bbr
\]
is a positive constant. However, by \eqref{assume4} and
Fatou's lemma, it follows that 
\[
\limsup_{t\to\infty}g(t)\le0\,,
\]
leading to a contradiction. We conclude that $\hat S$ is a finite set
and so is the support of $\nu_*$.  

In order to prove the uniqueness of an optimal measure, suppose that
$\nu_1$ and $\nu_2$ are two different optimal measures. By Property 
\ref{assume3}, the finitely many random variables $X_t$ for $t$
in the union of the 
supports of the two measures are linearly independent and, hence, the 
function 
\[
\alpha\mapsto\Var\left(\int_a^bX_s\bigl(\alpha\nu_1(ds)+(1-\alpha)\nu_2(ds)\bigr)\right)
\]
is strictly convex on $[0,1]$. Such a function cannot take the same
minimal value at the two endpoints $0$ and $1$, and the uniqueness
follows. 

We now prove the same claim under the assumption (b). By Proposition \ref{p0} the
assumptions S1 and S2 on the spectral measure of a stationary Gaussian
process $\BX$ imply Property \ref{assume1}, Property \ref{assume2} and
Property \ref{assume3}, so the only 
ingredient missing in an attempt to apply the statement of part (a) to
part (b) is that, in part (b), we have not assumed
\eqref{assume4}. Since the only place in the proof of part (a) where
\eqref{assume4} is used, is in ruling out  \eqref{p1.eq2}, we only
need to show that  \eqref{p1.eq2} can be ruled out under the
assumptions of part (b) as well. 

Indeed, suppose that \eqref{p1.eq2} holds. The assumption S1 implies that
 $R$ can be extended to an analytic  function on $\bbc\times\bbc$ by
 \[
 R(s,t)=\int_{-\infty}^\infty e^{\iota(t-s)x}F_X(dx), \, s,t\in\bbc \,.
 \]
Then the analytic 
function 
$$
g(t) = \int_a^b R(s,t)\, \nu(ds), \ t\in\bbc
$$
must be a real constant. Note that 
$$
g(t) = \int_{-\infty}^\infty e^{\iota tx} h(x)\, F_X(dx)\,,
$$
where
$$
h(x) = \int_a^b e^{-\iota sx}\, \nu(ds), \ \ x\in\bbr\,.
$$
Write
$$
h(x) = h_1(x) +\iota h_2(x), \ x\in\bbr\,,
$$
where $h_1$ is a real even function and $h_2$ is a real odd
function. Then, since $g$ is a real constant, we have
$$
g(t) = \int_{-\infty}^\infty \cos tx\,  h_1(x)\, F_X(dx) -
\int_{-\infty}^\infty \sin tx\,  h_2(x)\, F_X(dx), \ t\in\bbr\,.
$$
Since $g$ is an even function (a constant one), the second term in the
right hand side vanishes, so that
$$
g(t) = \int_{-\infty}^\infty \cos tx\,  h_1(x)\, F_X(dx), \ t\in\bbr\,.
$$
By the uniqueness of the Fourier transform, the only finite signed
measures to have constant transforms are point masses at the origin,
so we must have $h_1=0$ $F_X$-a.e. on $\{x\not=0\}$. Since the
function $h_1$ is real analytic, and the support of $F_X$ has an
accumulation point, we conclude that $h_1$ = 0 everywhere. This is not
possible since $h_1$ is the characteristic function of the probability
measure obtained by making $\nu$ a symmetric probability measure on
$[a,b]\cup[-b,-a]$. This rules out \eqref{p1.eq2}.  
\end{proof}

\begin{remark}
\label{rk1}
{\rm
Interestingly, in the non-stationary case  the statement of
Proposition \ref{p1} might be false if \eqref{assume4} is not
assumed. To see this,  consider the 
following example. Let $Y$ be a standard normal random
variable and $(Z_t:t\in\bbr)$ be a stationary mean zero Gaussian
process, independent of $Y$, with covariance 
\[
\E(Z_sZ_t)=e^{-(s-t)^2/2}\,,s,t\in\bbr\,.
\]
Define
\[
X_t=Y+Z_t-\int_0^1Z_sds\,,t\in\bbr\,.
\]
Clearly, the process $(X_t:t\in\bbr)$ has Property \ref{assume1},
Property \ref{assume2} and Property \ref{assume3}. 
However, if $[a,b]=[0,1]$, then 
the minimizer in \eqref{p1.eq1} is the  Lebesgue measure on $[0,1]$,
and it does not have a support of a finite cardinality. 
}
\end{remark}

We denote by $S$ the support of the unique minimizer $\nu_*$ in the
minimization problem \eqref{p1.eq1}. Two objects related to this set
will be of crucial importance in the sequel. First of all, we let 
\begin{equation}\label{eq.defmu}
\mu(t):=\E\left(X_t|X_s=1\text{ for all }s\in S\right)\,,t\in\bbr\,.
\end{equation}
The importance of the function $\mu$ stems from the following claim:
conditionally on the event $B_u$ in \eqref{e:Bu}, as $u\to\infty$, 
\begin{equation} \label{e:mu.xab}
\bigl( u^{-1}X(t), \, a\leq t\leq b\bigr) \to \bigl(\mu(t),  \, a\leq
t\leq b\bigr)
\end{equation} 
in probability, in $C[a,b]$. Indeed, it was shown in \ams\ that
\eqref{e:mu.xab} holds with the function $\mu$ replaced by the
function $\hat \mu(t)=\E\left(X_t|Y=1\right)$, $t\in\bbr$, 
defined  in the proof of Proposition 
\ref{p1}. Recall that $Y=\int_a^bX_s\, \nu_*(ds)$. 
Therefore,  we only need to show that
$\mu=\hat\mu$. Enumerate the elements of $S$ as 
$$
S=\{t_1,\ldots,t_k\}
$$ 
(this is the notation we will use throughout the paper) and write
$$
\E\bigl( X_t|
X_{t_1},\ldots, X_{t_k}\bigr) = \sum_{j=1}^k a_j(t)X_{t_j},\, t\in\bbr\,.
$$
Then 
\begin{align*}
\hat\mu(t)Y & = \E\bigl( X_t|Y\bigr) = \E\Bigl( \E\bigl( X_{t}|
X_{t_1},\ldots, X_{t_k}\bigr)\big| Y\Bigr) \\
& = \E\left( \sum_{j=1}^k a_j(t)X_{t_j} \Big| Y\right) = \sum_{j=1}^k
  a_j(t) \hat\mu(t_j) Y\\
& = \sum_{j=1}^k   a_j(t)  Y
\end{align*}
since $\hat\mu$ is equal to one at all points of the support of the
measure $\nu_*$. That is,
$$
\hat\mu(t) = \sum_{j=1}^k   a_j(t)  = \mu(t)\,,
$$
as required. 

We record for future use several useful facts about the function
$\mu$.
\begin{lemma} \label{l:mu.prop}
The function $\mu$ is a restriction to $\bbr$ of an analytic function on $\bbc$
and for each $j=0,1,2,\ldots$ and $t\in\bbr$, 
\begin{equation} \label{e:der.condE} 
\E\left(X^{(j)}_t|X_s=1\text{ for all }s\in S\right)= \mu^{(j)}(t), \
t\in\bbr\,.
\end{equation}
Further, for each $s\in S\cap (a,b)$ there exists an even positive integer $n$ such that
\begin{equation*}
\mu^{(1)}(s)=\ldots=\mu^{(n-1)}(s)=0<\mu^{(n)}(s)\,.
\end{equation*}
In particular, there is $\vep>0$ such that 
$\mu^{(2)}(t)>0$ for each $t\in (s-\vep,s)\cup (s,s+\vep)$. 

Similarly, if $a\in S$, then $\mu^{(1)}(a)\ge0$. If equality holds, then
 there is $\vep>0$ such that 
\[
\mu^{(2)}(t)>0,\,\text{for each }t\in   (a, a+\vep)\,.
\]
If  $b\in S$, then $\mu^{(1)}(b)\le0$. If equality holds, then
there is $\vep>0$ such that  
\[
\mu^{(2)}(t)>0,\,\text{for each }t\in   (b-\vep,b)\,.
\]
\end{lemma}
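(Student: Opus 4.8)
The plan is to derive everything from the identity $\mu=\hat\mu$ established immediately above the lemma, together with the two facts recorded in the proof of Proposition \ref{p1}: that $\hat\mu\ge 1$ on $[a,b]$ with $\hat\mu=1$ on $S$, and that $\hat\mu$ cannot be identically $1$. For analyticity I would write the conditional expectation of $X_t$ given the finite vector $(X_{t_1},\dots,X_{t_k})$ as a linear combination $\sum_{j=1}^k a_j(t)X_{t_j}$, where the coefficient vector equals $\Sigma^{-1}c(t)$ with $\Sigma$ the covariance matrix of $(X_{t_1},\dots,X_{t_k})$ (nonsingular by Property \ref{assume3}) and $c(t)=(R(t_1,t),\dots,R(t_k,t))$. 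Since each $R(t_i,\cdot)$ extends analytically to $\bbc$ by Property \ref{assume1} and $\Sigma^{-1}$ is a constant matrix, every $a_j$ is analytic; evaluating the conditional expectation at $X_s=1$ for all $s\in S$ amounts to setting $X_{t_j}=1$, so $\mu=\sum_j a_j$ is analytic on $\bbc$.

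For the derivative identity \eqref{e:der.condE} I would use mean-square smoothness: because $R$ is analytic, $t\mapsto X_t$ is infinitely differentiable in $L^2(\Omega)$ with $j$-th derivative $X_t^{(j)}$. Let $P$ be the orthogonal projection of $L^2(\Omega)$ onto $\mathrm{span}(X_{t_1},\dots,X_{t_k})$, so that the (jointly Gaussian) conditional expectation is the linear projection $PX_t=\sum_j a_j(t)X_{t_j}$. Since $P$ is bounded and linear it commutes with $L^2$-differentiation, whence $\E\!\left(X_t^{(j)}\mid X_s,\,s\in S\right)=PX_t^{(j)}=\sum_\ell a_\ell^{(j)}(t)X_{t_\ell}$. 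Evaluating at $X_{t_\ell}=1$ gives $\sum_\ell a_\ell^{(j)}(t)=\mu^{(j)}(t)$, which is \eqref{e:der.condE}.

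The heart of the lemma is the local behaviour at points of $S$, and here I would argue by order of vanishing. As $\mu$ is analytic and not identically $1$, the identity theorem forces $\mu-1$ to have isolated zeros, so at any $s\in S$ there is a smallest order $n\ge 1$ with $\mu^{(n)}(s)\ne 0$, and $\mu>1$ on a punctured neighbourhood. For interior $s\in S\cap(a,b)$ the inequality $\mu\ge 1$ holds on both sides, so $s$ is a genuine local minimum: this forces $\mu^{(1)}(s)=0$, and inspecting the sign of the leading Taylor term $\mu^{(n)}(s)(t-s)^n/n!$ on each side forces $n$ to be even with $\mu^{(n)}(s)>0$. Differentiating the expansion twice gives $\mu^{(2)}(t)=\bigl(\mu^{(n)}(s)/(n-2)!\bigr)(t-s)^{n-2}(1+o(1))$, strictly positive for $t\ne s$ near $s$ because $n-2$ is even (and, when $n=2$, by continuity of $\mu^{(2)}$), which yields the stated $\vep$.

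At an endpoint only a one-sided inequality is available. For $a\in S$, $\mu\ge 1$ on $[a,a+\vep)$ makes $a$ a right-endpoint minimum, giving $\mu^{(1)}(a)\ge 0$; if $\mu^{(1)}(a)=0$, then since only $t>a$ is constrained the leading surviving term forces $\mu^{(n)}(a)>0$ (now $n$ need not be even), and the same expansion of $\mu^{(2)}$ is positive on $(a,a+\vep)$ as $(t-a)^{n-2}>0$ there. The case $b\in S$ is symmetric, the bookkeeping using that for $t<b$ both $(t-b)^{n-2}$ and the forced sign of $\mu^{(n)}(b)$ equal $(-1)^n$, so their product is positive. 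The one genuinely technical point is the $L^2$ interchange of differentiation and projection in \eqref{e:der.condE}, which I would justify by explicitly invoking mean-square differentiability; the place most prone to error is the endpoint parity argument, precisely because one loses the two-sided constraint that pins down $n$ in the interior case.
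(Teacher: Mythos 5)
Your proposal is correct and follows essentially the same route the paper intends: the paper's proof of Lemma \ref{l:mu.prop} simply asserts that analyticity is already known, that \eqref{e:der.condE} is obvious, and that the local behaviour follows from analyticity, while you supply the standard details (the coefficients $a(t)=\Sigma^{-1}c(t)$, the $L^2$ commutation of projection with mean-square differentiation, and the order-of-vanishing Taylor arguments at interior points and endpoints, using $\mu\ge1$ on $[a,b]$, $\mu=1$ on $S$, and $\mu\not\equiv1$ from the proof of Proposition \ref{p1}). Your endpoint parity bookkeeping at $b$ is also right, so nothing is missing.
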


\begin{proof}
We already know that $\mu$ is a restriction to $\bbr$ of an analytic
function, and \eqref{e:der.condE} is obvious. The final property of
$\mu$ follows from the fact that it is analytic. 
\end{proof}

If we  define the ``essential'' set by 
\begin{equation} \label{e:ess.set}
E:=\{t\in[a,b]:\mu(t)=1\}\,,
\end{equation}
then we have proved above that $E=\hat S$,  defined in
\eqref{eq.defhats}. In particular, we showed in the proof of
Proposition \ref{p1} that under the assumptions of the proposition,
the essential set $E\supseteq S$ is a finite set as well. 
 
It turns out that in many cases the support $S$ of the optimal measure
for the optimization problem \eqref{p1.eq1} contains the endpoint of
the interval. This holds, in particular, under certain monotonicity
assumption in the covariance function of the process. Specific 
sufficient conditions are given in the following proposition. 
\begin{prop}\label{p2}
(a) \ Suppose that the 
following two conditions hold: 
 for all $a\le s_1\le s_2\le s_3\le b$, 
\begin{equation}\label{p2.eq1}
R(s_1,s_3)\le\min\{R(s_1,s_2),R(s_2,s_3)\}\,,
\end{equation}
and for all $s,t\in [a,b]$, 
\begin{equation}
\label{p2.eq2}R(s,t)<\min\{R(s,s),R(t,t)\}\text{ whenever }s\neq t\,. 
\end{equation}
Then any finite support optimal measure for the optimization problem \eqref{p1.eq1}
puts positive masses at the endpoints $a$ and $b$ of the interval. 

(b) \ Suppose that $\BX$ is a stationary Gaussian process whose 
covariance function $R$ is nonincreasing on $[0,b-a]$. If the spectral measure $F_X$
is not a point mass at the origin, then the conclusion of part (a)
holds. 
\end{prop}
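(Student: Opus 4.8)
The plan is to phrase everything in terms of the function $g(t):=\int_a^b R(s,t)\,\nu_*(ds)$ attached to a finite-support optimal measure $\nu_*$, whose support I enumerate as $S=\{t_1,\dots,t_k\}$ with $t_1<\dots<t_k$. Recall from the discussion in and after the proof of Proposition~\ref{p1} that, with $Y=\int_a^b X_s\,\nu_*(ds)$, one has (by Theorem~5.1 in \ams\ together with \eqref{eq.defmuhat} and $\mu=\hat\mu$) the relations $g(t)\ge\Var(Y)$ for all $t\in[a,b]$, with equality exactly on $S$, where $\Var(Y)$ is the strictly positive minimum value of \eqref{p1.eq1}. I would prove part (a) by contradiction: assume $a\notin S$, so that $t_1>a$, and aim to contradict \eqref{p2.eq2}.

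For the key step I would compare $g(a)$ with $g(t_1)=\Var(Y)$. Every $s\in S$ satisfies $a<t_1\le s$, so applying \eqref{p2.eq1} with $(s_1,s_2,s_3)=(a,t_1,s)$ yields $R(a,s)\le R(t_1,s)$. Integrating against $\nu_*$ gives $g(a)\le g(t_1)=\Var(Y)$; since also $g(a)\ge\Var(Y)$, equality holds throughout. As $\nu_*$ is a positive measure with finite support $S$, equality of the integrals forces $R(a,s)=R(t_1,s)$ for every $s\in S$. Taking $s=t_1$ gives $R(a,t_1)=R(t_1,t_1)$, contradicting \eqref{p2.eq2} since $a\ne t_1$. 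Hence $a\in S$. The symmetric argument applied to the largest support point $t_k$, invoking \eqref{p2.eq1} with $(s_1,s_2,s_3)=(s,t_k,b)$ to get $R(s,b)\le R(s,t_k)$ and then evaluating at $s=t_k$, shows $b\in S$, completing part (a).

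For part (b) the plan is to reduce to part (a) by verifying its two hypotheses for a stationary $R$ with $R(s,t)=R(|t-s|)$ nonincreasing on $[0,b-a]$. Condition \eqref{p2.eq1} is immediate: for $s_1\le s_2\le s_3$ in $[a,b]$ the gaps satisfy $s_3-s_1\ge\max\{s_2-s_1,\,s_3-s_2\}$, all lying in $[0,b-a]$, so monotonicity gives both required inequalities. Condition \eqref{p2.eq2} becomes $R(r)<R(0)$ for $0<r\le b-a$; suppose instead that $R(r_0)=R(0)$ for some such $r_0$. Monotonicity then forces $R\equiv R(0)$ on $[0,r_0]$, and the spectral representation gives $\int_{-\infty}^\infty(1-\cos rx)\,F_X(dx)=0$ for all $r\in[0,r_0]$. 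Since the integrand is nonnegative, for each such $r$ we get $\cos(rx)=1$ for $F_X$-almost every $x$; intersecting over rational $r\in[0,r_0]$ and using continuity in $r$, $F_X$-almost every $x$ satisfies $\cos(rx)=1$ for all $r\in[0,r_0]$, which forces $x=0$. Thus $F_X$ is concentrated at the origin, contradicting the hypothesis, so \eqref{p2.eq2} holds and part (a) applies.

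I expect the main obstacle to be the bookkeeping in the comparison step of part (a): one must invoke the two-variable monotonicity \eqref{p2.eq1} in the correct orientation, so that for support points $s$ lying to the right of $t_1$ it yields $R(a,s)\le R(t_1,s)$ rather than the reverse, and then correctly deduce pointwise equality on the finite support from equality of the integrals. In part (b) the only delicate point is the strictness in \eqref{p2.eq2}, which is exactly where the assumption that $F_X$ is not a point mass at the origin enters; the monotonicity input for \eqref{p2.eq1} is routine.
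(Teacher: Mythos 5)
Your proof is correct, and it is best described as the paper's argument in a different wrapper rather than a genuinely new route. The paper proves part (a) by an explicit one-parameter perturbation: it moves mass $\vep$ from $t_1$ to $\delta_a$ and computes
$\frac d{d\vep}\Var\bigl(\int_a^b X_s\,\nu_\vep(ds)\bigr)\big|_{\vep=0}=2\sum_{i=1}^k\alpha_i\bigl[R(a,t_i)-R(t_1,t_i)\bigr]$,
which is non-positive termwise by \eqref{p2.eq1} (applied to the same triples $(a,t_1,t_i)$ you use) and strictly negative in the $i=1$ term by \eqref{p2.eq2} (at the same pair $(a,t_1)$), directly contradicting optimality. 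Your comparison quantity $g(a)-g(t_1)$ is exactly this derivative up to the factor $2$; the only structural difference is that, instead of contradicting optimality via strict descent, you invoke the first-order (KKT-type) condition $g\ge\Var(Y)$ on $[a,b]$ with $g=\Var(Y)$ on $S$ from Theorem 5.1 of \ams\ and contradict $g(a)\ge\Var(Y)$. That import is legitimate even under the bare hypotheses of part (a) --- it follows from the same one-line perturbation $\nu_\vep=(1-\vep)\nu_*+\vep\delta_t$ --- so your version trades the paper's self-contained computation for an external lemma; the content is identical. One small inaccuracy: ``equality exactly on $S$'' is wrong in general, since $\hat\mu=1$ (equivalently $g=\Var(Y)$) holds on the possibly strictly larger set $E$ of \eqref{e:ess.set} (e.g.\ $b=c_1$ in Example \ref{ex:Gauss}, where $E=\{a,b/2,b\}\supsetneq S=\{a,b\}$); this is harmless here because you only use ``$\ge$ everywhere and $=$ on $S$''. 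For part (b), your spectral computation with $\int_{-\infty}^\infty(1-\cos rx)\,F_X(dx)=0$ is simply a fully written-out version of the paper's one-sentence remark that a covariance function constant near the origin forces the spectral measure to be concentrated at $0$; the reduction to part (a) via monotonicity is the same in both.
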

\begin{proof}
We start with part (a). 
Assume, to the contrary, that $\nu$ puts no mass at the
point $a$.  Enumerate the elements of the support of $\nu$ (which we
still denote by $S$)  as $\{t_1,\ldots,t_k\}$, with the smallest of
these elements, $t_1>a$. If $\nu(\{ t_i\})=\alpha_i>0, \, i=1,\ldots,
k$, then 
\[
\Var\left(\int_a^bX_s\, \nu(ds)\right)=\sum_{i=1}^k\sum_{j=1}^k\alpha_i\alpha_jR(t_i,t_j)\,.
\]
Define a probability measure
\[
\nu_\vep:=\vep\delta_{a}+(\alpha_1-\vep)\delta_{t_1}+\sum_{i=2}^k\alpha_i\delta_{t_i}\,,0\le\vep\le\alpha_1\,.
\]
Notice that $\nu_0\equiv\nu$, and
\begin{eqnarray*}
\frac d{d\vep}\Var\left.\left(\int_a^bX_s\, \nu_\vep(ds)\right)\right|_{\vep=0}&=&2\sum_{i=1}^k\alpha_i\left[R(a,t_i)-R(t_1,t_i)\right]\\
&<&0\,.
\end{eqnarray*}
The inequality follows from  the observation that  $a<t_1\le t_i$ for
all $i$ and hence by \eqref{p2.eq1} the summands are non-positive, and
the term with $i=1$ is strictly negative by \eqref{p2.eq2}. This
contradicts the fact that $\nu$ is an optimal measure. Thus,
$a\in S$. A similar argument shows that $b\in S$. 

For part (b), we only need to check that the assumptions of part (a)
hold. The assumption \eqref{p2.eq1} follows from monotonicity of the
covariance function of the stationary process. The only additional
argument needed for \eqref{p2.eq2} is the observation that, unless the
spectral measure is concentrated at the origin, the covariance
function cannot be constant in an neighborhood of the origin. 
\end{proof}
 
We will use in the sequel several facts about the finite-dimensional
centered Gaussian vector $(X_t,t\in S)$. These facts are collected in
the proposition below. We will use the common notation $f(u)\sim
c\,g(u)$ as $u\to\infty$ (where $g$ is a non-vanishing function) to
mean that
\begin{equation}
\label{eq.defsim}\lim_{u\to\infty}\frac{f(u)}{g(u)}=c\,.
\end{equation}
Note that the possibility  $c=0$ is allowed.

In the sequel, all vectors are column vectors unless mentioned
otherwise. We use the notation 
$\one$ for a vector with entries equal
to $1$, whose dimension  is clear from the context. 

\begin{prop} \label{l:Gauss.vector}
Under the assumptions of either part (a) or part (b) of Proposition
\ref{p1}, let $S:=\{t_1,\ldots,t_k\}$
 be the  finite cardinality support of the unique minimizer
in \eqref{p1.eq1}, and let $\Sigma$ be the covariance matrix of the
vector $\bigl( X_{t_1}, \ldots, X_{t_k}\bigr)$. 

(i) Denote 
\begin{equation}
\label{e:deftheta}\theta:=\Sigma^{-1}\one\,,
\end{equation}
Then,
\begin{equation}
\label{t2.claim1}\theta_j>0\text{ for all }1\le j\le k\,.
\end{equation}

(ii)  Conditionally on the event $\{ \min_{t\in S} X_t>u\}$, we have
\begin{equation}
\label{l:Gauss.vector:claim}\bigl(u(X_{t_1}-u),\ldots,u(X_{t_k}-u)\bigr)\Rightarrow\bigl(E_1,\ldots,E_k\bigr)
\end{equation}
as $u\to\infty$, where
$(E_1,\ldots,E_k)$ are independent exponential random variables with
parameters $\theta_1,\ldots, \theta_k$ respectively. 

(iii) The distributional tail of the minimal component of the Gaussian
vector $\bigl( X_{t_1}, \ldots, X_{t_k}\bigr)$ satisfies 
\begin{equation}
\label{e:equiv2}\P\bigl( \min_{t\in S} X_t>u\bigr)\sim
\frac{c}{\theta_1\ldots \theta_k} u^{-k}
\exp\left\{ -\frac12 u^2(\theta_1+\ldots +\theta_k)\right\}
\end{equation}
as $u\to\infty$, where 
\begin{equation} \label{e:c}
c=(2\pi)^{-k/2}(\det\Sigma)^{-1/2}\,. 
\end{equation} 
\end{prop}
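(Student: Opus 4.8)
The plan is to dispose of part (i) using the variational characterization of the optimal measure, and then to derive parts (ii) and (iii) simultaneously from a single Laplace-type expansion of the Gaussian density of $(X_{t_1},\dots,X_{t_k})$ near the corner of the orthant $\{x_j>u\}$.

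For part (i), I would use the facts recorded in the proof of Proposition \ref{p1}. Write $\alpha_j:=\nu_*(\{t_j\})$ for the masses of the unique optimal measure; these are strictly positive since $S=\{t_1,\dots,t_k\}$ is exactly the support of $\nu_*$. With $Y=\int_a^b X_s\,\nu_*(ds)$ as in \eqref{e:Y}, recall from \eqref{eq.defmuhat} that $\hat\mu(t)=\bigl(\int_a^b R(s,t)\,\nu_*(ds)\bigr)/\Var(Y)$ and that $\hat\mu\equiv 1$ on $S$. Evaluating at $t=t_i$ gives $\sum_{j=1}^k\alpha_j R(t_i,t_j)=\Var(Y)$ for every $i$, that is, $\Sigma\alpha=\Var(Y)\,\one$ in matrix form. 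Since $\Sigma$ is nonsingular by Property \ref{assume3} and $\Var(Y)>0$ by Proposition \ref{p1}, we conclude that $\theta=\Sigma^{-1}\one=\alpha/\Var(Y)$, so every $\theta_j$ is strictly positive, proving \eqref{t2.claim1}.

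For parts (ii) and (iii) I would change variables $x=u\one+u^{-1}w$ in the Gaussian density $(2\pi)^{-k/2}(\det\Sigma)^{-1/2}\exp\{-\tfrac12 x^\top\Sigma^{-1}x\}$. Using $\Sigma^{-1}\one=\theta$ and $\one^\top\Sigma^{-1}\one=\sum_j\theta_j$, the exponent expands exactly as
\begin{equation*}
-\tfrac12 x^\top\Sigma^{-1}x=-\tfrac12 u^2\sum_{j=1}^k\theta_j-\sum_{j=1}^k\theta_j w_j-\tfrac{1}{2u^2}w^\top\Sigma^{-1}w,
\end{equation*}
while the event $\{x_j>u \text{ for all }j\}$ becomes $\{w_j>0\text{ for all }j\}$ and the Jacobian contributes a factor $u^{-k}$. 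Integrating over the positive orthant gives
\begin{equation*}
\P\bigl(\min_{t\in S}X_t>u\bigr)=c\,u^{-k}\exp\Bigl\{-\tfrac12 u^2\sum_{j=1}^k\theta_j\Bigr\}\int_{(0,\infty)^k}\exp\Bigl\{-\sum_{j=1}^k\theta_j w_j\Bigr\}\exp\Bigl\{-\tfrac{1}{2u^2}w^\top\Sigma^{-1}w\Bigr\}\,dw,
\end{equation*}
with $c$ as in \eqref{e:c}, and the remaining integral tends to $\prod_{j=1}^k\theta_j^{-1}$ as $u\to\infty$, which is exactly \eqref{e:equiv2}. For part (ii), the conditional density of $(u(X_{t_1}-u),\dots,u(X_{t_k}-u))$ is precisely this integrand divided by $\P(\min_{t\in S}X_t>u)$; the same passage to the limit shows that it converges pointwise on $(0,\infty)^k$ to $\prod_j\theta_j e^{-\theta_j w_j}$, the joint density of independent exponentials with the stated rates.

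The main obstacle, and the only genuinely analytic point, is justifying these passages to the limit rigorously rather than formally. Here I would invoke dominated convergence: the error factor $\exp\{-\tfrac{1}{2u^2}w^\top\Sigma^{-1}w\}$ is bounded above by $1$ since $\Sigma^{-1}$ is positive definite, so the integrand is dominated by $\exp\{-\sum_j\theta_j w_j\}$, which is integrable over $(0,\infty)^k$ precisely because every $\theta_j>0$ by part (i). This simultaneously yields the limit of the normalizing integral in (iii) and, via Scheff\'e's lemma, the convergence of the conditional densities in $L^1$, hence the weak convergence asserted in \eqref{l:Gauss.vector:claim}. It is worth emphasizing that part (i) is what makes the limiting object in (ii) a bona fide probability density and the integral in (iii) finite, so the three parts are naturally proved in this logical order.
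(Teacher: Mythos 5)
Your proof is correct, but it diverges from the paper's in instructive ways. For part (i), the paper argues by contradiction: assuming $\sum_j\sigma^{-1}_{1j}\le 0$, it perturbs the optimal weight vector $\lambda^{(0)}$ along $-\vep\Sigma^{-1}\be^{(1)}$ and shows this strictly decreases the quadratic form while keeping the constraints, contradicting optimality. You instead extract the first-order optimality identity directly from Theorem 5.1 of \ams\ (already invoked in the proof of Proposition \ref{p1}): since $\hat\mu=1$ on the support of $\nu_*$, evaluating $\hat\mu(t_i)=1$ gives $\Sigma\alpha=\Var(Y)\one$, hence $\theta=\alpha/V_*$. This is both shorter and strictly more informative---it identifies $\theta$ as the vector of optimal masses rescaled by $1/V_*$, a fact the paper only establishes later, inside the proof of Theorem \ref{t5}, via Theorem 4.3(ii) of \ams; it also yields $\sum_i\theta_i=1/V_*$ (the paper's \eqref{t.q1.q2}) for free. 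For parts (ii) and (iii), both arguments are the same Laplace-type expansion at the corner $u\one$ of the orthant, but the executions differ: the paper shifts by $u$ without rescaling and must split the resulting integral into the regions $\max y_i>\vep$ and $\max y_i\le\vep$, bounding each piece and letting $\vep\downarrow 0$, and it proves part (ii) by establishing convergence of the joint survival functions $\P(X_{t_i}>u+h_i/u,\ i=1,\dots,k)$ for arbitrary $h_i\ge 0$ (its \eqref{e:equiv1}); your rescaled substitution $x=u\one+u^{-1}w$ isolates the quadratic remainder as $\exp\{-\tfrac{1}{2u^2}w^\top\Sigma^{-1}w\}\le 1$ on the orthant (positive semidefiniteness suffices there), so dominated convergence---with dominating function $e^{-\theta^\top w}$, integrable precisely because of part (i)---handles the limit in one stroke, and Scheff\'e's lemma upgrades pointwise convergence of the conditional densities to total variation convergence, which is stronger than the weak convergence claimed in \eqref{l:Gauss.vector:claim}. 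Your logical ordering, with (i) feeding the integrability needed in (ii)--(iii), matches the paper's; the only point to be careful about (and you are) is that the conditional density of $\bigl(u(X_{t_j}-u)\bigr)_j$ includes the Jacobian factor $u^{-k}$ and the prefactor $c\,e^{-\frac12 u^2\sum_j\theta_j}$, which cancel against the same factors in $\P(\min_{t\in S}X_t>u)$ furnished by part (iii).
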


In \eqref{l:Gauss.vector:claim} and  similar statements in the sequel, the
law of the random vector in the left hand side is computed, for every
$u>0$, as the conditional law given $\min_{t\in S} X_t>u$. That is, \eqref{l:Gauss.vector:claim} means that
\[
P\left(\left.\bigl(u(X_{t_1}-u),\ldots,u(X_{t_k}-u)\bigr)\in\cdot\right|\min_{t\in S} X_t>u\right)\Rightarrow P\left((E_1,\ldots,E_k)\in\cdot\right)\,,
\]
weakly in $\bbr^k$, as $u\to\infty$.

\begin{proof}[Proof of Proposition \ref{l:Gauss.vector}]
We start with part (i). Recall that by Property 3, the inverse matrix 
 $\Sigma^{-1}=(\sigma^{-1}_{ij})$ is well defined and, hence, so is 
the vector $\theta$. 
Suppose, for instance, that, to the contrary,  
\begin{equation} \label{e:neg.sum}
\sum_{j=1}^{k} \sigma^{-1}_{1j}\leq 0\,.
\end{equation}
Define a $k\times1$ vector $\lambda^{(0)}$ by
\[
\lambda^{(0)}_j:=\nu(\{t_j\})\,,1\le j\le k\,,
\]
where $\nu$ is the unique  minimizer in \eqref{p1.eq1}. In other
words, $\lambda^{(0)}$ is the minimizer in the problem 
\begin{equation}
\label{e:minimize}\min_{\lambda\in\bbr^k_+:\, \sum_{i=1}^k\lambda_i=1}\lambda^T\Sigma\lambda\,.
\end{equation}  
Clearly,  $\lambda_i^{(0)}>0$ for all $i=1,\ldots,k$. For
small $\vep>0$ consider vectors of the form
$$
\lambda^{(\vep)} =\lambda^{(0)} -\vep\Sigma^{-1} \be^{(1)}\,,
$$
where $\be^{(1)}=(1,0,\ldots,0)$. Then it follows from
\eqref{e:neg.sum} that 
\begin{equation} \label{e:atleast.1}
\sum_{i=1}^n \lambda_i^{(\vep)}\geq 1. 
\end{equation}
Note that
$$
(\lambda^{(\vep)})^T\Sigma \lambda^{(\vep)} = (\lambda^{(0)})^T\Sigma
\lambda^{(0)} -2\vep \lambda^{(0)}_1 + \vep^2 (\be^{(1)})^T\Sigma^{-1}
\be^{(1)}\,.
$$
Since $\lambda^{(0)}_1>0$, for small $\vep>0$, this expression is
strictly smaller than 
$$
(\lambda^{(0)})^T\Sigma \lambda^{0} \,.
$$
Additionally, for small $\vep>0$ the vector $\lambda^{(\vep)}$ has
positive components. Recalling 
\eqref{e:atleast.1}, this contradicts the optimality of
$\lambda^{(0)}$ in \eqref{e:minimize}. This contradiction shows that
$\theta_1>0$. Similarly, $\theta_j>0$ for all $1\leq j\leq k$. Hence,
\eqref{t2.claim1} holds. 

For parts (ii) and (iii)  we start by noticing that we 
can write, with $c>0$ given by \eqref{e:c},  for any $h_i\geq 0, \,
i=1,\ldots, k$, for $u>0$, 
\begin{align*}
& \P\bigl( X_{t_i}>u+h_i/u, \ i=1,\ldots, k\bigr) \\
=&c\int_{u+h_1/u}^\infty \ldots \int_{u+h_k/u}^\infty
\exp\left\{ -\frac12 \by^T \Sigma^{-1}\by\right\}\, dy_1\ldots dy_k \\
=&c\int_{h_1/u}^\infty \ldots \int_{h_k/u}^\infty
\exp\left\{ -\frac12 \sum_{i=1}^k\sum_{j=1}^k (y_i+u)(y_j+u)\sigma^{-1}_{ij}
\right\}\, dy_1\ldots dy_k \\
=&c \exp\left\{ -\frac12 u^2\sum_{i=1}^k\sum_{j=1}^k
  \sigma^{-1}_{ij}\right\} \int_{h_1/u}^\infty \ldots\\
& \ldots \int_{h_k/u}^\infty
\exp\left\{ -\frac12 \sum_{i=1}^k\sum_{j=1}^k
  y_iy_j\sigma^{-1}_{ij}\right\} 
\exp\left\{ -u\sum_{i=1}^ky_i\sum_{j=1}^k   \sigma^{-1}_{ij}\right\} 
\, dy_1\ldots dy_k \\
=&c \exp\left\{ -\frac12 u^2(\theta_1+\ldots +\theta_k)
\right\} \\
&\int_{h_1/u}^\infty \ldots \int_{h_k/u}^\infty
\exp\left\{ -\frac12 \sum_{i=1}^k\sum_{j=1}^k
  y_iy_j\sigma^{-1}_{ij}\right\} \exp\left\{
  -u\sum_{i=1}^k\theta_i y_i \right\}  
\, dy_1\ldots dy_k \\
:=&c \exp\left\{ -\frac12 u^2(\theta_1+\ldots +\theta_k)
\right\} \int_{h_1/u}^\infty \ldots \int_{h_k/u}^\infty A(y_1,\ldots, y_k)
\, dy_1\ldots dy_k \,.
\end{align*}
For $\vep>0$ we write
\begin{align*}
&\int_{h_1/u}^\infty \ldots \int_{h_k/u}^\infty A(y_1,\ldots, y_k)
\, dy_1\ldots dy_k \\
= & \int_{h_1/u}^\infty \ldots \int_{h_k/u}^\infty \one\bigl(
\max(y_1,\ldots, y_k)>\vep\bigr) A(y_1,\ldots, y_k) \, dy_1\ldots dy_k 
\\
+ &\int_{h_1/u}^\vep \ldots \int_{h_k/u}^\vep  A(y_1,\ldots, y_k)
\, dy_1\ldots dy_k := I_{\vep, 1}(u) + I_{\vep,2}(u)\,.
\end{align*}
Note that
$$
c  I_{\vep, 1}(u) \leq \exp\bigl\{ -\vep u\min_{i=1,\ldots,
  k}\theta_i\bigr\} \P(X_{t_i}>u,\ i=1,\ldots, k)\,.
$$
On the other hand,
\begin{align*}
& I_{\vep,2}(u) \left( \int_{h_1/u}^\infty \ldots \int_{h_k/u}^\infty
\exp\left\{ - u\sum_{i=1}^k \theta_i y_i \right\}  \, dy_1\ldots
dy_k\right)^{-1} \\
&\in \left( \exp\left\{ -\frac12\vep^2\sum_{i=1}^k\sum_{j=1}^k \big|
    \sigma^{-1}_{ij}\big|\right\},1\right)\,.
\end{align*}
Since
\begin{align*}
&\int_{h_1/u}^\infty \ldots \int_{h_k/u}^\infty
\exp\left\{ - u\sum_{i=1}^k \theta_i y_i \right\}  \, dy_1\ldots
dy_k \\
=& u^{-k}\bigl( \theta_1\ldots \theta_k\bigr)^{-1} \exp\bigl\{
-\sum_{i=1}^k \theta_ih_i\bigr\}\,,
\end{align*}
we conclude that as $u\to\infty$,
$$
\int_{h_1/u}^\infty \ldots \int_{h_k/u}^\infty A(y_1,\ldots, y_k)
\, dy_1\ldots dy_k
  \sim u^{-k}\bigl( \theta_1\ldots \theta_k\bigr)^{-1}
\exp\bigl\{ -\sum_{i=1}^k \theta_ih_i\bigr\}
$$
and, hence, 
\begin{eqnarray}
\label{e:equiv1}&& \P\bigl( X_{t_i}>u+h_i/u, \ i=1,\ldots, k\bigr) \\
\nonumber&\sim&\frac{c}{\theta_1\ldots\theta_k}\exp\left\{ -\sum_{i=1}^k \theta_ih_i\right\}u^{-k}\exp\left\{ -\frac12 u^2(\theta_1+\ldots +\theta_k)\right\} 
\end{eqnarray}
as $u\to\infty$. Putting $h_1=\ldots=h_k=0$ we obtain
\eqref{e:equiv2}, which together with \eqref{e:equiv1} proves the 
claim of part (ii). 
\end{proof}

\section{The main results}\label{sec:results} 

In this section we will answer Questions 1-4 mentioned in the
introduction. These questions are all centered around the overall
infimum $\min_{t\in[a,b]}X_t$ of the process $\BX$ and the behaviour
of the process when the overall infimum is large. It turns out that,
when the overall infimum is large, its behaviour is similar, but not identical,
to the behaviour of a simpler object - the minimal value in a finite-dimensional
Gaussian vector, formed by several key observations of the process. 
Understanding what happens when the latter minimum is large is an
important ingredient in our analysis in this section. 

Recall (by Proposition \ref{p1}) 
 that under the assumptions we are imposing in this paper, the
optimization problem \eqref{p1.eq1} has a unique minimizer, a
probability measure with a support of a finite cardinality, which we
denote by $S$. Then the minimal value in a finite-dimensional
Gaussian vector mentioned above is simply $\min_{t\in S}X_t$. 

We start with Question 1 of the introduction. We will need additional
notation, which we introduce now. Let, once again, $S$ be the support
of a finite cardinality of the unique minimizer in
\eqref{p1.eq1}. Denote 
\begin{eqnarray}
\label{eq.defy}Y_t&:=&\E(X_t|X_s, s\in S)\,,\\
\label{eq.defz}Z_t&:=&X_t-Y_t\,,
\end{eqnarray}
$t\in\bbr$. Then $(Y_t:t\in\bbr)$ and $(Z_t:t\in\bbr)$ are two
centered Gaussian processes. Moreover, 
the process $(Z_t:t\in\bbr)$ is independent of $(X_s,s\in S)$. In
particular, the process $(Z_t:t\in\bbr)$ is independent of the random
variable $\min_{t\in S}X_t$. Recall that under the assumptions of
Proposition \ref{p1} (which we will always assume), the sample paths of the processes
$\BY$ and $\BZ$ are in $C^\infty$. 

Recall the definition of the essential set $E$ in
\eqref{e:ess.set}, which is a (not necessarily strict) superset of $S$. It is a finite set, and we will enumerate its
points as $E=\{t_1,\ldots,t_l\}$, in such a way that the first $k$
points form the support of the unique minimizer in
\eqref{p1.eq1},  i.e. $S=\{t_1,\ldots,t_k\}$, $k\leq l$, and $a\leq
t_1<\ldots <t_k\leq b$. 

Let $\theta$ be the $k$-dimensional vector with positive coordinates
defined in \eqref{e:deftheta}, and let $\mu$ be the function defined
in \eqref{eq.defmu}. We define several random variables. Let 
$$
W_{(a,b)} = 
\exp\left\{ - \frac12\sum_{j=1,\ldots, k:\, t_j\in  (a,b)} 
  \frac{\theta_j}{\mu^{(2)}(t_j)}\bigl( Z_{t_j}^{(1)}\bigr)^2\right\}\,,
$$
following the usual convention that a positive number divided by zero is plus infinity, and $e^{-\infty}=0$. That is, the right hand side of the above is to be interpreted as zero if $\mu^{(2)}(t_j)=0$ for some $j$ such that $t_j\in(a,b)$.
Let, further,  
$$
W_a= 
\one\bigl( Z_{t_1}^{(1)}>0\bigr) + 
\exp\left\{ - \frac{\theta_1}{2\mu^{(2)}(t_1)}\bigl( Z_{t_1}^{(1)}\bigr)^2\right\}
\one\bigl(   Z_{t_1}^{(1)}<0\bigr) 
$$
if both $t_1=a$ and  $\mu^{(1)}(t_1)=0$, and let $W_a=1$
otherwise. Similarly, we let
$$
W_b= 
\one\bigl( Z_{t_k}^{(1)}<0\bigr) + 
\exp\left\{ - \frac{\theta_1}{2\mu^{(2)}(t_k)}\bigl( Z_{t_k}^{(1)}\bigr)^2\right\}
\one\bigl(   Z_{t_k}^{(1)}>0\bigr) 
$$
if both $t_k=b$ and  $\mu^{(1)}(t_k)=0$, and let $W_b=1$
otherwise. Finally, we let
\begin{equation} \label{e:W}
W= W_{(a,b)}W_aW_b\prod_{j=k+1}^l \one\bigl( Z_{t_j}>0\bigr)\,.
\end{equation}

For the rest of this section we will use the notation $\Sigma$ for the
covariance matrix of the Gaussian vector  $\bigl(X_{t_1}, \ldots,
X_{t_k}\bigr)$ and $V_*$ for the optimal value in the optimization
problem \eqref{p1.eq1}. 

It follows from \ams\ that 
\[
\log \P\left(\min_{t\in[a,b]}X_t>u\right)\sim\log \P\left(\min_{t\in
    S}X_t>u\right), \ u\to\infty\,,
\]
in the sense of \eqref{eq.defsim}. The following
theorem both explains how the two tail probabilities are related once the
logarithms are removed, and answers Question 1.  
\begin{theorem}\label{t.q1}
Suppose that a Gaussian process $\BX$ satisfies either (a) or (b) of Proposition \ref{p1}. Then, as $u\to\infty$,
\begin{equation}
\label{t.q1.eq1}\P\left(\min_{t\in[a,b]}X_t>u\right)\sim\E(W) \P\left(\min_{t\in S}X_t>u\right)\,,
\end{equation}
and hence
\begin{equation}
\label{t.q1.eq2}\P\left( \min_{t\in [a,b]} X_t>u\right) \sim
\frac{\E(W)}{(2\pi)^{k/2}(\theta_1\ldots\theta_k)(\det\Sigma)^{1/2}}u^{-k}
\exp\left(-\frac{u^2}{2V_*}\right)\,, 
\end{equation}
the above equivalences to be interpreted as in \eqref{eq.defsim}, including when $\E(W)=0$.
Furthermore, $\E(W)>0$ if and only if 
\begin{equation}
\label{t2.eq2}
\mu^{(2)}(t)>0\text{ for all }t\in S\cap(a,b)\,.
\end{equation}
\end{theorem}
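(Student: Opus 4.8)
The plan is to compare the path minimum to the discrete minimum over $S$ by conditioning on $\mathcal G:=\sigma(X_s:s\in S)$ and exploiting the independence of the residual process. Writing $X_t=Y_t+Z_t$ as in \eqref{eq.defy}--\eqref{eq.defz}, we have $\BZ$ independent of $\mathcal G$, while $Y_{t_j}=X_{t_j}$ and $Z_{t_j}=0$ for $t_j\in S$. Since $S\subseteq[a,b]$, the event $B_u$ forces $\min_{t\in S}X_t>u$, and conditioning on $\mathcal G$ gives
$$
\P(B_u)=\E\bigl[\Phi_u(Y)\bigr],\qquad \Phi_u(Y):=\P_Z\bigl(Z_t>u-Y_t\ \text{for all }t\in[a,b]\bigr),
$$
where $\P_Z$ integrates out $\BZ$ with $Y$ fixed. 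Because $Z_{t_j}=0$, the factor $\Phi_u(Y)$ vanishes unless $Y_{t_j}=X_{t_j}>u$ for every $j$, whence
$$
\frac{\P(B_u)}{\P(\min_{t\in S}X_t>u)}=\E\bigl[\Phi_u(Y)\,\big|\,\min_{t\in S}X_t>u\bigr].
$$
Everything reduces to showing this conditional expectation converges to $\E(W)$. Relation \eqref{t.q1.eq2} is then immediate from Proposition \ref{l:Gauss.vector}(iii), after noting $\theta_1+\cdots+\theta_k=\one^T\Sigma^{-1}\one=1/V_*$ (the optimal weight vector in \eqref{e:minimize} is $\theta/\one^T\theta$, by positivity from part (i)).

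The core is the asymptotics of $\Phi_u(Y)$. Using $a_i(t_j)=\delta_{ij}$, $\sum_i a_i(t)=\mu(t)$, and the exponential approximation $E_j^{(u)}:=u(X_{t_j}-u)$ of Proposition \ref{l:Gauss.vector}(ii), I would expand, for $t=t_j+s$ near a point $t_j\in E$,
$$
u-Y_{t_j+s}=u\bigl(1-\mu(t_j+s)\bigr)-\tfrac1u\sum_i a_i(t_j+s)E_i^{(u)}+o(u^{-1}),
$$
together with $Z_{t_j+s}=Z_{t_j}^{(1)}s+O(s^2)$ (for $t_j\in S$) and the vanishing-derivative structure of $\mu$ from Lemma \ref{l:mu.prop}. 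First I would show the problem \emph{localizes}: off a union of small neighborhoods of the finite set $E$ one has $\mu\ge 1+c$, so $u-Y_t\le-\tfrac c2 u\to-\infty$ uniformly, and by Gaussian concentration (Borell--TIS applied to $\sup_t|Z_t|$) the constraint there holds with $\P_Z$-probability tending to $1$. Near an interior support point $t_j\in S\cap(a,b)$ with $\mu^{(2)}(t_j)>0$, the rescaling $s=r/u$ converts the local constraint into $\tfrac12\mu^{(2)}(t_j)r^2+Z_{t_j}^{(1)}r+E_j>0$ for all $r\in\reals$, i.e. $\bigl(Z_{t_j}^{(1)}\bigr)^2<2\mu^{(2)}(t_j)E_j$; at a boundary point only $r\ge0$ (resp.\ $r\le0$) enters, yielding the one-sided conditions behind $W_a,W_b$; and at $t_j\in E\setminus S$ (where $Z_{t_j}$ is nondegenerate and $u-Y_{t_j}\to0$) the constraint collapses to $Z_{t_j}>0$.

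Assembling these, $\Phi_u(Y)=G_u(E^{(u)})$ with $G_u\to F$ locally uniformly, where
$$
F(e_1,\dots,e_k)=\P_Z\Bigl(\bigcap_{t_j\in S\cap(a,b)}\bigl\{(Z_{t_j}^{(1)})^2<2\mu^{(2)}(t_j)e_j\bigr\}\cap(\text{boundary conditions})\cap\bigcap_{j>k}\{Z_{t_j}>0\}\Bigr)
$$
is bounded and continuous (the constraint boundaries are $\BZ$-null). Combining $E^{(u)}\Rightarrow(E_1,\dots,E_k)$ (independent exponentials with parameters $\theta_j$) from Proposition \ref{l:Gauss.vector}(ii) with $0\le G_u\le1$ via a bounded-convergence argument for weak convergence yields $\E[\Phi_u(Y)\mid\min_{t\in S}X_t>u]\to\E[F(E)]$. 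Finally, Fubini lets me integrate the exponentials first for fixed $\BZ$: since $\P(E_j>x)=e^{-\theta_j x}$, the interior factors give $\prod_j\exp\{-\tfrac{\theta_j}{2\mu^{(2)}(t_j)}(Z_{t_j}^{(1)})^2\}=W_{(a,b)}$ and the boundary factors give $W_a,W_b$, so that $\E[F(E)]=\E_Z[W]=\E(W)$, proving \eqref{t.q1.eq1}.

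For the last assertion, $\E(W)>0$ iff $W>0$ with positive probability. The factors $W_a,W_b$ are strictly positive a.s., and $\P(Z_{t_j}>0,\ k<j\le l)>0$ by nondegeneracy (Property \ref{assume3}), so the dichotomy is governed entirely by $W_{(a,b)}$: if $\mu^{(2)}(t_j)>0$ for every $t_j\in S\cap(a,b)$ then $W_{(a,b)}>0$ a.s.\ and $\E(W)>0$; if $\mu^{(2)}(t_j)=0$ for some such $t_j$ then, by the stated convention, $W_{(a,b)}=0$ a.s.\ (as $Z_{t_j}^{(1)}\ne0$ a.s.), giving $\E(W)=0$. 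The main obstacle I anticipate is the localization-and-uniformity step: controlling $\BZ$ and its derivatives well enough to discard the contribution away from $E$, to justify the $s=r/u$ rescaling uniformly over the relevant window, and to pass from the pointwise limit of $G_u$ to convergence of the conditional expectation. This last point includes the degenerate case $\mu^{(2)}(t_j)=0$, where Lemma \ref{l:mu.prop} forces the first nonzero derivative to be even of order $n\ge4$; the critical window widens to $s\sim u^{-1/(n-1)}$, the required overshoot $E_j$ diverges, and one must check directly that $\Phi_u\to0$, consistently with $\E(W)=0$.
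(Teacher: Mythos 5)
Your proposal is correct in substance, arrives at the same limiting functional $W$, and even recovers $\sum_i\theta_i=1/V_*$ by a self-contained KKT argument where the paper cites Theorem 5.1 of \ams; but it is organized genuinely differently from the paper's proof. You condition on $\sigma(X_s:s\in S)$, write the ratio of probabilities as $\E[\Phi_u(Y)\mid\min_S X>u]$ with $\Phi_u(Y)=\P_Z(Z_t>u-Y_t\ \forall t)$, and analyze the constraint set directly after the rescaling $s=r/u$, turning it near each $t_j\in S\cap(a,b)$ into the quadratic positivity condition $\tfrac12\mu^{(2)}(t_j)r^2+Z^{(1)}_{t_j}r+E_j>0$ for all $r$, i.e. $\bigl(Z^{(1)}_{t_j}\bigr)^2<2\mu^{(2)}(t_j)E_j$, and then integrate out the exponentials by Fubini. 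The paper instead builds an auxiliary process $\tilde X^{(u)}=\tilde Y^{(u)}+Z$ with $\tilde Y^{(u)}$ carrying the conditional law of $Y$, and reduces all local behaviour to \emph{scalar} convergences in probability: Theorem \ref{t1} shows $u\bigl(X_{t_j}-\min_{|s-t_j|\le\vep}X_s\bigr)\prob W_j$ by locating the local argmin $t_*$ and using two Taylor expansions, after which Lemma \ref{l3}, the continuous mapping theorem, and a two-sided sandwich (lower bound over the neighborhoods $G\cup H$, upper bound over the finite set $H\cup(E\setminus S)$, then monotone convergence as $\vep\downarrow0$) yield $\E W$. The event $\{E_j-W_j>0\}$ is exactly your discriminant condition, so the two computations coincide. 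What your route buys is cleaner probabilistic bookkeeping (one conditioning identity plus a standard lemma combining weak convergence of $E^{(u)}$ with locally uniform convergence $G_u\to F$); what it costs is that the step you yourself flag as the main obstacle --- uniform control on the intermediate window $M/u\le|s-t_j|\le\vep$, the off-$E$ region, and the degenerate case $\mu^{(2)}(t_j)=0$ where $\Phi_u\to0$ must be checked by hand --- is precisely the analytic content that the paper's Theorem \ref{t1} and its $\vep\downarrow0$ sandwich supply. Your sketch of that step (Borell--TIS for $\sup_t|Z_t|$ off $E$, Taylor expansion with a.s.\ control of $Z^{(2)}$ on compacts near $E$, divergent required overshoot $E_j\gtrsim u^{(n-2)/(n-1)}$ when the first nonvanishing even derivative has order $n\ge4$) is sound and would go through.

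One concrete correction to your final paragraph: $W_a$ and $W_b$ are \emph{not} strictly positive a.s.\ in general. Lemma \ref{l:mu.prop} only guarantees $\mu^{(2)}>0$ on a punctured one-sided neighbourhood of an endpoint in $S$; if, say, $t_1=a$ with $\mu^{(1)}(a)=0=\mu^{(2)}(a)$, then by the stated convention $W_a=\one\bigl(Z^{(1)}_{t_1}>0\bigr)$, which vanishes with probability $1/2$. This does not break the dichotomy, but the `if' direction should be argued as the paper does: by Property \ref{assume3} the relevant Gaussian vector is nondegenerate, so $\P\bigl(Z^{(1)}_a>0,\,Z^{(1)}_b<0,\,\min_{k+1\le j\le l}Z_{t_j}>0\bigr)>0$, and on that event $W=W_{(a,b)}>0$ whenever \eqref{t2.eq2} holds.
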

 
We now proceed to address the rest of the questions in the
introduction. From now on we assume \eqref{t2.eq2} to hold. The
following result answers Question 2. 

\begin{theorem}
\label{t3}
Under assumptions of Theorem \ref{t.q1}, assume also that
\eqref{t2.eq2} holds. Then, as $u\to\infty$,
\[
\P\left((X_t-u\mu(t):a\le t\le b)\in\cdot\left|\min_{t\in
      [a,b]}X_t>u\right.\right)\Rightarrow Q_W(\cdot) 
\]
weakly on $C[a,b]$, where $Q_W$ is a probability measure on $C[a,b]$
defined by
$$
Q_W(B) =\frac{1}{\E W} \E\left[ \one\left( \bigl( Z_t:a\le t\le
b\bigr)\in B\right)W\right], \ B\subset C[a,b], \ \text{Borel,}
$$
with $W$ given by \eqref{e:W}.
\end{theorem}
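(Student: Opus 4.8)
The plan is to prove the stated weak convergence on $C[a,b]$ by showing that $\E\left[F\bigl((X_t-u\mu(t))_{a\le t\le b}\bigr)\mid B_u\right]\to\int F\,dQ_W$ for every bounded continuous $F\colon C[a,b]\to\reals$; since $C[a,b]$ is Polish, this is precisely the asserted weak convergence, and no separate tightness step is required. Using Theorem \ref{t.q1} to rewrite the denominator as $\P(B_u)\sim\E(W)\,\P\left(\min_{t\in S}X_t>u\right)$, the assertion reduces to
\[
\E\left[F\bigl((X_t-u\mu(t))_t\bigr)\one_{B_u}\right]\sim\E\bigl[F\bigl((Z_t)_t\bigr)\,W\bigr]\,\P\left(\min_{t\in S}X_t>u\right),\qquad u\to\infty.
\]

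The decomposition $X_t=Y_t+Z_t$ with $Y_t=\sum_{j=1}^k a_j(t)X_{t_j}$ and $\sum_{j=1}^k a_j(t)=\mu(t)$ gives
\[
X_t-u\mu(t)=Z_t+R_u(t),\qquad R_u(t):=\sum_{j=1}^k a_j(t)\bigl(X_{t_j}-u\bigr).
\]
Since $\BZ$ is independent of $(X_s\colon s\in S)$ and $Z_{t_j}=0$ for $t_j\in S$, I would condition on the entire path of $\BZ$. Given $\BZ=z$, the event $B_u$ depends only on the vector $(X_{t_1},\dots,X_{t_k})$, namely $B_u=\bigl\{\sum_{j=1}^k a_j(t)X_{t_j}>u-z_t\ \text{for all }t\in[a,b]\bigr\}$; in particular, because $B_u\subseteq\{\min_{t\in S}X_t>u\}$ and $\min_{t\in S}X_t$ is independent of $\BZ$, one has $\P(B_u\mid\BZ=z)\le\P\left(\min_{t\in S}X_t>u\right)$ for all $u$ and $z$.

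The analytic heart is the path-by-path conditional asymptotic
\[
\P\bigl(B_u\mid\BZ=z\bigr)\sim w(z)\,\P\left(\min_{t\in S}X_t>u\right),\qquad u\to\infty,
\]
valid for almost every $z$, where $w(z)$ is obtained from \eqref{e:W} by substituting $z_{t_j}$ and $z^{(1)}_{t_j}$ for $Z_{t_j}$ and $Z^{(1)}_{t_j}$. This is exactly the estimate behind the proof of Theorem \ref{t.q1}. Writing $X_{t_j}=u+e_j/u$, the constraints for $t$ outside the essential set $E$ hold automatically since there $\mu(t)>1$, while near each essential point a second-order Taylor expansion---using $\mu^{(1)}=0$ and $\mu^{(2)}>0$ at interior support points, the sign of $\mu^{(1)}$ at the endpoints from Lemma \ref{l:mu.prop}, and $\mu=1$ on $E\setminus S$---reduces the local constraint to $e_j>(z^{(1)}_{t_j})^2/(2\mu^{(2)}(t_j))$ at interior support points, to a one-sided version of this at boundary support points, and to $z_{t_j}>0$ at points of $E\setminus S$. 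Since conditionally on $\{\min_{t\in S}X_t>u\}$ the overshoots $(e_1,\dots,e_k)$ converge to independent exponentials with parameters $\theta_1,\dots,\theta_k$ (Proposition \ref{l:Gauss.vector}(ii)), the survival probabilities of these local constraints multiply to produce exactly the factors defining $W_{(a,b)}$, $W_a$, $W_b$ and $\prod_{j=k+1}^l\one(Z_{t_j}>0)$.

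Granting this, the argument concludes as follows. On $B_u$ the overshoots $e_j=u(X_{t_j}-u)$ are tight (Proposition \ref{l:Gauss.vector}(ii)), so $\sup_t|R_u(t)|\le C\max_j|X_{t_j}-u|\to0$ and hence $F(Z+R_u)\to F(Z)$ by continuity of $F$; splitting on $\{\max_j e_j\le M\}$ and its complement, bounding the latter by $2\|F\|_\infty\,\P(\max_j e_j>M\mid\min_{t\in S}X_t>u)$ and letting $M\to\infty$, yields
\[
\E\left[F(Z+R_u)\one_{B_u}\mid\BZ=z\right]\sim F(z)\,\P\bigl(B_u\mid\BZ=z\bigr)\sim F(z)\,w(z)\,\P\left(\min_{t\in S}X_t>u\right).
\]
Integrating over $\BZ$ and using the uniform bound $\P(B_u\mid\BZ=z)/\P(\min_{t\in S}X_t>u)\le1$ together with the boundedness of $F$, dominated convergence gives $\E[F(X-u\mu)\one_{B_u}]\sim\E[F(Z)\,w(Z)]\,\P(\min_{t\in S}X_t>u)=\E[F(Z)\,W]\,\P(\min_{t\in S}X_t>u)$, which is the desired equivalence. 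I expect the main obstacle to be the path-by-path asymptotic above---localizing the continuum of constraints $\{X_t>u\}$ to neighborhoods of the finitely many essential points and showing that the local survival factors decouple in the limit; this difficulty is shared with Theorem \ref{t.q1}, and the only genuinely new ingredients here, the uniform-in-$z$ domination and the replacement of $F(X-u\mu)$ by $F(Z)$, are routine.
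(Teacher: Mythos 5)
Your proposal is correct in substance, but it takes a genuinely different route from the paper. The paper's own proof of Theorem \ref{t3} is a short reduction: it writes the conditional probability as a ratio of probabilities conditioned on $\{\min_{t\in S}X_t>u\}$, sends the denominator to $\E W$ by Theorem \ref{t.q1}, and obtains the numerator limit for continuity sets $B$ of the law of $\BZ$ by rerunning the \emph{annealed} argument of Theorem \ref{t.q1}, i.e.\ the joint weak convergence \eqref{t2.eq6} (weak convergence of the overshoots spliced with convergence in probability of the $\BZ$-functionals via Lemma \ref{l3}), the continuous mapping theorem, and the $\vep\downarrow0$ sandwich with monotone convergence. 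You instead disintegrate over the path of $\BZ$: you prove (or rather, posit) the \emph{quenched} tail equivalence $\P(B_u\mid\BZ=z)\sim w(z)\,\P(\min_{t\in S}X_t>u)$ for a.e.\ $z$, exploit the clean domination $\P(B_u\mid\BZ=z)\le\P(\min_{t\in S}X_t>u)$ coming from $B_u\subseteq\{\min_{t\in S}X_t>u\}$ and the independence of $\BZ$ from $(X_s:s\in S)$, and conclude by dominated convergence against bounded continuous test functions (your replacement of $F(Z+R_u)$ by $F(Z)$ via tightness of the overshoots is fine, as is the test-function formulation in place of the paper's continuity sets). What your route buys is a stronger, intrinsically interesting conditional local-limit statement and a very transparent final integration step; what the paper's route buys is that it never needs any pathwise asymptotic at all. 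One caution: your assertion that the quenched equivalence ``is exactly the estimate behind the proof of Theorem \ref{t.q1}'' is not literally accurate --- the paper only ever proves the averaged version, so you cannot simply cite it. You would have to re-derive the localization of Lemma \ref{l1} and the Taylor-expansion analysis of Theorem \ref{t1} with $z$ frozen (legitimate, since conditioning on $\BZ=z$ leaves the law of the overshoot vector given $\min_{t\in S}X_t>u$ unchanged), restrict to a.e.\ $z$ so that $z^{(1)}_{t_j}\neq0$ at the relevant support points and $z_{t_j}\neq0$ for $j=k+1,\ldots,l$ (the discontinuity sets of the indicators in $w$), and close the limit by sandwiching the asymptotic constraints $\{e_j>c_j(z)\pm\delta\}$ with \eqref{e:equiv1} and letting $\delta\downarrow0$ using continuity of the exponential limit law. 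These supplements are routine given the paper's machinery, so I regard this as a presentational over-claim rather than a gap; with them filled in, your argument is a valid alternative proof.
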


The next result is an answer to Question 3.

\begin{theorem}
\label{t4}
Under assumptions of Theorem \ref{t.q1}, assume also that
\eqref{t2.eq2} holds. Then, as $u\to\infty$, the conditional
distribution of $u\bigl( \min_{t\in [a,b]}X_t-u\bigr)$  given
$\min_{t\in [a,b]}X_t>u$ converges weakly to an exponential
distribution with mean $V_*$.
\end{theorem}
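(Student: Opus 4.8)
The plan is to express the target conditional law through a ratio of tail probabilities at two nearby levels and then to invoke Theorem \ref{t.q1} twice, so that the geometric prefactor $\E(W)$ cancels, leaving a finite-dimensional ratio that I can evaluate with Proposition \ref{l:Gauss.vector}. For fixed $x\ge 0$ I would start from
$$
\P\left( u\bigl(\min_{t\in[a,b]}X_t-u\bigr)>x \,\Big|\, \min_{t\in[a,b]}X_t>u\right) = \frac{\P\bigl(\min_{t\in[a,b]}X_t>u+x/u\bigr)}{\P\bigl(\min_{t\in[a,b]}X_t>u\bigr)}\,,
$$
and insert the finite-dimensional minimum $\min_{t\in S}X_t$ to factor this as
$$
\frac{\P\bigl(\min_{t\in[a,b]}X_t>u+x/u\bigr)}{\P\bigl(\min_{t\in S}X_t>u+x/u\bigr)}\cdot\frac{\P\bigl(\min_{t\in S}X_t>u+x/u\bigr)}{\P\bigl(\min_{t\in S}X_t>u\bigr)}\cdot\frac{\P\bigl(\min_{t\in S}X_t>u\bigr)}{\P\bigl(\min_{t\in[a,b]}X_t>u\bigr)}\,.
$$

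For the first and third factors I would apply Theorem \ref{t.q1}: since the standing assumption \eqref{t2.eq2} guarantees $\E(W)>0$, the third factor tends to $1/\E(W)$ and the first factor tends to $\E(W)$, so these two contributions cancel in the product. The only subtlety here is that the first factor applies Theorem \ref{t.q1} at the \emph{moving} level $u+x/u$ rather than at $u$; this is legitimate because the equivalence \eqref{t.q1.eq1} is a genuine limit of the ratio $\P(\min_{[a,b]}X>v)/\P(\min_S X>v)$ as $v\to\infty$, and $v=u+x/u\to\infty$, so the ratio converges to $\E(W)$ along this sequence as well. The middle factor is handled by Proposition \ref{l:Gauss.vector}: taking $h_1=\cdots=h_k=x$ in \eqref{e:equiv1} and comparing with \eqref{e:equiv2} gives
$$
\frac{\P\bigl(\min_{t\in S}X_t>u+x/u\bigr)}{\P\bigl(\min_{t\in S}X_t>u\bigr)} \longrightarrow \exp\Bigl\{-x\sum_{j=1}^k\theta_j\Bigr\}\,, \quad u\to\infty\,.
$$

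It then remains to identify $\sum_{j=1}^k\theta_j$ with $1/V_*$. Since $\theta=\Sigma^{-1}\one$ has strictly positive entries by part (i) of Proposition \ref{l:Gauss.vector}, the vector $\theta/\sum_j\theta_j$ lies in the simplex and satisfies the Lagrange conditions for \eqref{e:minimize}, so it is the minimizer; its value is $V_*=\theta^T\Sigma\theta/(\sum_j\theta_j)^2=(\one^T\theta)/(\sum_j\theta_j)^2=1/\sum_j\theta_j$, where I used $\Sigma\theta=\one$. Hence the middle factor tends to $e^{-x/V_*}$, and multiplying the three limits shows that the conditional tail converges to $e^{-x/V_*}$ for every $x\ge 0$; for $x<0$ the conditional probability is identically $1$, matching the exponential survival function. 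As $e^{-x/V_*}$ is the continuous survival function of the exponential law with mean $V_*$, this pointwise convergence of tails yields the asserted weak convergence.

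I expect the main obstacle to be the justification of substituting the moving level $u+x/u$ into Theorem \ref{t.q1}; once that is granted, the argument is essentially bookkeeping built on Theorem \ref{t.q1} and Proposition \ref{l:Gauss.vector}, with the optimization identity $\sum_j\theta_j=1/V_*$ supplying the correct exponential rate.
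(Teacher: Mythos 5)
Your proof is correct, but it follows a genuinely different route from the paper's. The paper deduces Theorem \ref{t4} from Theorem \ref{t4.new}, a stronger localization statement: conditionally on $\{\min_{t\in[a,b]}X_t>u\}$, the global minimum is attained near $S$ and the vector of local minima $\bigl(u(M_{1\vep}-u),\ldots,u(M_{k\vep}-u)\bigr)$ converges jointly to independent exponentials with parameters $\theta_1,\ldots,\theta_k$; taking the minimum of the coordinates and invoking the identity \eqref{t.q1.q2} then gives the rate $1/V_*$. You instead write the conditional tail as the ratio $\P\bigl(\min_{[a,b]}X>u+x/u\bigr)/\P\bigl(\min_{[a,b]}X>u\bigr)$, apply Theorem \ref{t.q1} at the two levels $u$ and $u+x/u$ so that the factor $\E(W)$ (finite and positive under \eqref{t2.eq2}) cancels, and evaluate the surviving finite-dimensional ratio via \eqref{e:equiv1}--\eqref{e:equiv2}; your justification for the moving level is sound, since \eqref{t.q1.eq1} is a genuine limit of a ratio as the level tends to infinity and $u+x/u\to\infty$. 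Your Lagrange-condition derivation of $\sum_j\theta_j=1/V_*$ is also valid and self-contained: $\theta/(\one^T\theta)$ is interior to the simplex by \eqref{t2.claim1}, satisfies $\Sigma\lambda\propto\one$, and strict convexity (via Property 3) makes it the unique minimizer of \eqref{e:minimize}, whose optimal value is $V_*$ because $\nu_*$ is supported on $S$; the paper instead obtains this identity inside the proof of Theorem \ref{t.q1} by citing the dual formulation in Theorem 5.1 of \ams. What each approach buys: yours is shorter and avoids the delicate conditional joint-convergence machinery of Theorems \ref{t1} and \ref{t4.new} entirely, reducing the overshoot law to a tail-ratio computation; the paper's heavier route produces the joint limiting structure of the local minima, which it then reuses to prove Theorem \ref{t5} on the location of the minimum, so within the paper the extra work is amortized, whereas your argument yields only the one-dimensional overshoot distribution.
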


Finally, we answer Question 4.

\begin{theorem}
\label{t5}
Under assumptions of Theorem \ref{t.q1}, assume also that
\eqref{t2.eq2} holds. Let
\[
T_{*}:=\arg\min_{s\in[a,b]}X_s\,,
\]
where we choose the leftmost location of the minimum in case there are
ties. Then, as $u\to\infty$,
\[
\P\left(T_{*}\in\cdot\, \Bigr|\min_{s\in[a,b]}X_s>u\right)\Rightarrow \nu_*\,,
\]
where $\nu_*$ is the unique minimizer in \eqref{p1.eq1}. 
\end{theorem}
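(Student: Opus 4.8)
The plan is to exploit that the limit $\nu_*$ is a purely atomic measure supported on the finite set $S=\{t_1,\ldots,t_k\}$, with $\nu_*(\{t_j\})=\theta_j/(\theta_1+\cdots+\theta_k)$; this identity follows from $\theta=\Sigma^{-1}\one$ together with the fact that the optimal weights solve $\Sigma\lambda=V_*\one$ (the optimality condition $\hat\mu=1$ on $S$ from the proof of Proposition \ref{p1}). Weak convergence to $\nu_*$ then reduces to two statements: first, that $T_*$ concentrates on arbitrarily small neighbourhoods of $S$; and second, that for each $j$ and each sufficiently small neighbourhood $I_j$ of $t_j$,
$$
\P\bigl(T_*\in I_j\,\big|\,B_u\bigr)\longrightarrow \frac{\theta_j}{\theta_1+\cdots+\theta_k}=\nu_*(\{t_j\})\,.
$$
Since $\nu_*$ is atomic on $S$, these two facts give the asserted weak convergence.

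For the concentration step I would invoke Theorem \ref{t3}: conditionally on $B_u$ the process $X_t-u\mu(t)$ converges weakly in $C[a,b]$, hence is tight, so $\sup_{a\le t\le b}|X_t-u\mu(t)|=O_P(1)$. As $\mu\ge1$ with equality exactly on the finite essential set $E$ and $\mu>1$ off $E$, the minimiser $T_*$ lies, with conditional probability tending to one, in an arbitrarily small neighbourhood of $E$. To discard the points of $E\setminus S$ I would compare overshoots: at $t_j\in S$ one has $X_{t_j}-u=O_P(1/u)$ (Proposition \ref{l:Gauss.vector}(ii) survives the reweighting that passes from conditioning on $\{\min_{t\in S}X_t>u\}$ to conditioning on $B_u$), whereas at $t_j\in E\setminus S$ the limit of $X_{t_j}-u=X_{t_j}-u\mu(t_j)$ is $Z_{t_j}>0$, a strictly positive $O_P(1)$ quantity forced by the factor $\one(Z_{t_j}>0)$ in $W$. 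Hence the global minimum is, for large $u$, achieved within $O(1/u)$ of a point of $S$.

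The heart of the argument is the second statement, carried out by a local analysis near each $t_j\in S$ in the window $t=t_j+w/u$. Using $Z_{t_j}=0$ on $S$ and $Y_t=\sum_i a_i(t)X_{t_i}$ with $\sum_i a_i=\mu$, a Taylor expansion gives, for interior $t_j$ where $\mu^{(1)}(t_j)=0<\mu^{(2)}(t_j)$,
$$
u\bigl(X_{t_j+w/u}-u\bigr)\;\longrightarrow\;\tfrac12\mu^{(2)}(t_j)\,w^2+Z^{(1)}_{t_j}\,w+E_j\,,
$$
a parabola whose minimiser is $O(1)$ in $w$, hence $O(1/u)$ in $t$ so that $T_*\to t_j$, and whose minimal value is $M_j:=E_j-(Z^{(1)}_{t_j})^2/(2\mu^{(2)}(t_j))$; the boundary cases in which $t_1=a$ or $t_k=b$ produce the one-sided minima encoded in $W_a,W_b$. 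On $B_u$ the binding constraint is $M_i>0$ for every $i$, and $\{T_*\in I_j\}$ becomes $\{M_j=\min_iM_i\}$. Conditioning on the process $Z$, which is independent of $(X_s,s\in S)$, the overshoots converge to independent exponentials $E_i$ of rates $\theta_i$ independent of $Z$; writing $c_i:=(Z^{(1)}_{t_i})^2/(2\mu^{(2)}(t_i))$, the substitution $m_i=E_i-c_i$ factorises
$$
\P\bigl(E_i>c_i\ \forall i,\ E_j-c_j\le E_i-c_i\ \forall i\,\big|\,Z\bigr)=\Bigl(\prod_i e^{-\theta_ic_i}\Bigr)\,\P(\tilde E_j=\min_i\tilde E_i)\,,
$$
where $\tilde E_i$ are independent exponentials of rates $\theta_i$. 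The first factor is exactly $W$ (the boundary and $E\setminus S$ contributions reproducing $W_a,W_b$ and the indicators $\one(Z_{t_j}>0)$), while the second equals $\theta_j/(\theta_1+\cdots+\theta_k)$. Taking expectations over $Z$ gives $\P(B_u,T_*\in I_j)/\P(\min_{t\in S}X_t>u)\to\nu_*(\{t_j\})\,\E W$, whereas the same reasoning with the argmin constraint removed reproduces $\P(B_u)/\P(\min_{t\in S}X_t>u)\to\E W$, which is Theorem \ref{t.q1}; dividing, the factor $\E W$ cancels and leaves $\nu_*(\{t_j\})$.

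The main obstacle is not the algebra — the change of variables that simultaneously produces $W$ and the exponential competition probability is clean — but the rigorous, uniform justification of the local limit. One must show that the contribution to $\{B_u,T_*\in I_j\}$ from outside the $O(1/u)$ windows around the points of $S$ is negligible relative to $\P(B_u)$, control the higher-order terms in the Taylor expansions of $\mu$ and of $Z$ uniformly over these windows, handle the boundary points $a,b$, and dispose of ties (which carry vanishing probability in the limit). This is most naturally done by mirroring the proof of Theorem \ref{t.q1}, inserting the localisation constraint $T_*\in I_j$ into the identical estimates, so that the asymptotics of $\P(B_u,T_*\in I_j)$ are obtained by the same machinery already used for $\P(B_u)$.
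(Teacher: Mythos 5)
Your proposal is correct and follows essentially the same route as the paper's proof: localization of the conditional minimum to shrinking neighbourhoods of $S$ (the paper's Theorem \ref{t4.new}, claim \eqref{t4.new.claim1}), the joint exponential limit of the local minima overshoots under $B_u$ obtained by exactly your memorylessness factorisation conditional on $\BZ$ (the paper's proof of \eqref{t4.new.claim2}, with your window computation $t=t_j+w/u$ being the content of Theorem \ref{t1}), and the competition probability $\theta_j/(\theta_1+\cdots+\theta_k)$. The only cosmetic difference is the identification $\hat\nu=\nu_*$: you derive it directly from the first-order optimality condition $\Sigma\lambda=V_*\one$, while the paper invokes Theorem 4.3(ii) of \ams\ together with uniqueness of the minimizer --- the same fact in different clothing.
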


\section{Examples}\label{sec:examples} 

In order to illustrate the general results in Section
\ref{sec:results} we will, in 
this section, look at specific examples of Gaussian
processes satisfying the assumptions of the general results. We start
with a quintessential example of a stationary process.

\begin{ex} \label{ex:Gauss} {\bf Gaussian covariance function} 
{\rm 

Consider the zero mean 
stationary Gaussian process $(X_t:t\in\bbr)$ with covariance function 
\[
R(t):=\E(X_sX_{s+t})=\exp\left(- t^2/2\right), \, t\geq 0\,.
\]
This process has a spectral density that coincides with the standard
Gaussian density, as in \eqref{e:Gauss.sp.dens}, hence the process has properties S1 and S2 of
Section \ref{sec:smoothness}. Therefore, the results in Section
\ref{sec:results}  apply. Recall that some of the results require the
assumption \eqref{t2.eq2}. We will presently see both that this assumption
may fail and that this assumptions fails only rarely. 

Let
\begin{equation}\label{eg.eq1}
0=a<b<c_1:=\min\left\{y>0:2e^{-y^2/8}-1-e^{-y^2/2}=0\right\}\approx2.2079\,.
\end{equation}
It has been shown in Proposition 5.3 and Example 6.1 of \ams\ that for such an interval,
\begin{eqnarray}
\nonumber E=S&=&\{a,b\}\,,\\
\label{eg.eq2}\mu(t)&=&\frac{e^{-t^2/2}+e^{-(b-t)^2/2}}{1+e^{-b^2/2}},
                        \, t\in [a,b]\,, \\
\nonumber V_*&=& \frac{1+e^{-b^2/2}}{2}\,.
\end{eqnarray}
Assumption \eqref{t2.eq2} holds automatically, and 
Theorem \ref{t.q1} implies that 
\[
\P\left(\min_{a\le t\le
    b}X_t>u\right)\sim C_1u^{-2}\exp\left(-\frac1{1+e^{-b^2/2}}u^2\right) 
\]
for $C_1>0$ as $u\to\infty$. In fact, one can check that 
\begin{eqnarray}\label{eg.eq3}
C_1:=\frac1{2\pi}\frac{(1-e^{-b^2})^{3/2}}{(1-e^{-b^2/2})^2}\,.
\end{eqnarray}
By Theorem \ref{t4}, conditionally on the event $\bigl\{ \min_{t\in
  [a,b]}X_{t}>u\bigr\} $, the scaled overshoot $u\bigl( \min_{t\in
  [a,b]}X_{t}-u\bigr)$  converges weakly, as $u\to\infty$, 
 to an exponential random variable with the mean $(1+e^{-b^2/2})/2$.

Next we consider the situation when $a=0$ and $b=c_1$, defined by 
in \eqref{eg.eq1}. Then $\mu$ and $V_*$ are still as in
\eqref{eg.eq2}, but we now have 
$S=\{a,b\}$ and $E=\{a,b/2,b\}$. Theorem \ref{t.q1} still applies, and
it gives 
\begin{equation}
\P\left(\min_{a\le t\le
    b}X_t>u\right)\sim\frac12C_1u^{-2}\exp\left(-\frac1{1+e^{-b^2/2}}u^2\right)\,, 
\end{equation}
as $u\to\infty$, where $C_1$ is as defined in \eqref{eg.eq3}. The
asymptotic conditional distribution of  the scaled overshoot $u\bigl( \min_{t\in
  [a,b]}X_{t}-u\bigr)$  is same as in the case $b<c_1$. 

We proceed to the case  $a=0$ and 
\begin{equation}
\label{eg.eq4}c_1<b<c_2:=\min\left\{y>c_1:(1-\vep(y))\left(\frac{y^2}4-1\right)e^{-y^2/8}
=\vep(y)\right\}\,, 
\end{equation}
with 
\[
\vep(y):=\frac{1+e^{-y^2/2}-2e^{-y^2/8}}{3+e^{-y^2/2}-4e^{-y^2/8}}\,.
\]
The value of $c_2\approx 3.9283$. Proposition 5.5 of \ams\ shows that
in this case 
\begin{align}
E=S=&\{a,b/2,b\}\,,\notag \\
\mu(t)=&\frac{1}{V_*}\left[\frac{1-\vep(b)}2\left(e^{-t^2/2}+e^{-(b-t)^2/2}\right)
+\vep(b)e^{-(t-b/2)^2/2}\right], \, t\in\bbr\,, \notag \\
V_*=&
       \Var\left(\frac{1-\vep(b)}2\left(X_a+X_b\right)+\vep(b)X_{b/2}\right)\,.\label{eg.eq6} 
\end{align}
In this case 
\begin{equation}
\label{eg.eq5}\mu^{(2)}(b/2)>0\,, 
\end{equation}
so that \eqref{t2.eq2} holds. Theorem \ref{t.q1} implies that for some 
$C_2>0$,  
\[
\P\left(\min_{a\le t\le b}X_t>u\right)\sim C_2u^{-3}\exp\left(-\frac1{2V_*}u^2\right)\,,
\]
as $u\to\infty$. 
The $b$-dependent constant $C_2$  can be explicitly calculated if
desired. The
asymptotic conditional distribution of  the scaled overshoot $u\bigl( \min_{t\in
  [a,b]}X_{t}-u\bigr)$  is exponential   with mean $V_*$. 

In the case when $a=0$ and $b=c_2$, which are as in \eqref{eg.eq4}, then
\eqref{eg.eq6} still holds, but \eqref{eg.eq5} fails, and therefore the
assumption \eqref{t2.eq2} no longer holds.  
In this situation,   Theorem \ref{t.q1} only says that 
\[
\P\left(\min_{a\le t\le
    b}X_t>u\right)=o\left(u^{-3}\exp\left(-\frac1{2V_*}u^2\right)\right)\,,
\]
as $u\to\infty$. 

\begin{figure}[!ht]
\includegraphics[scale=0.6]{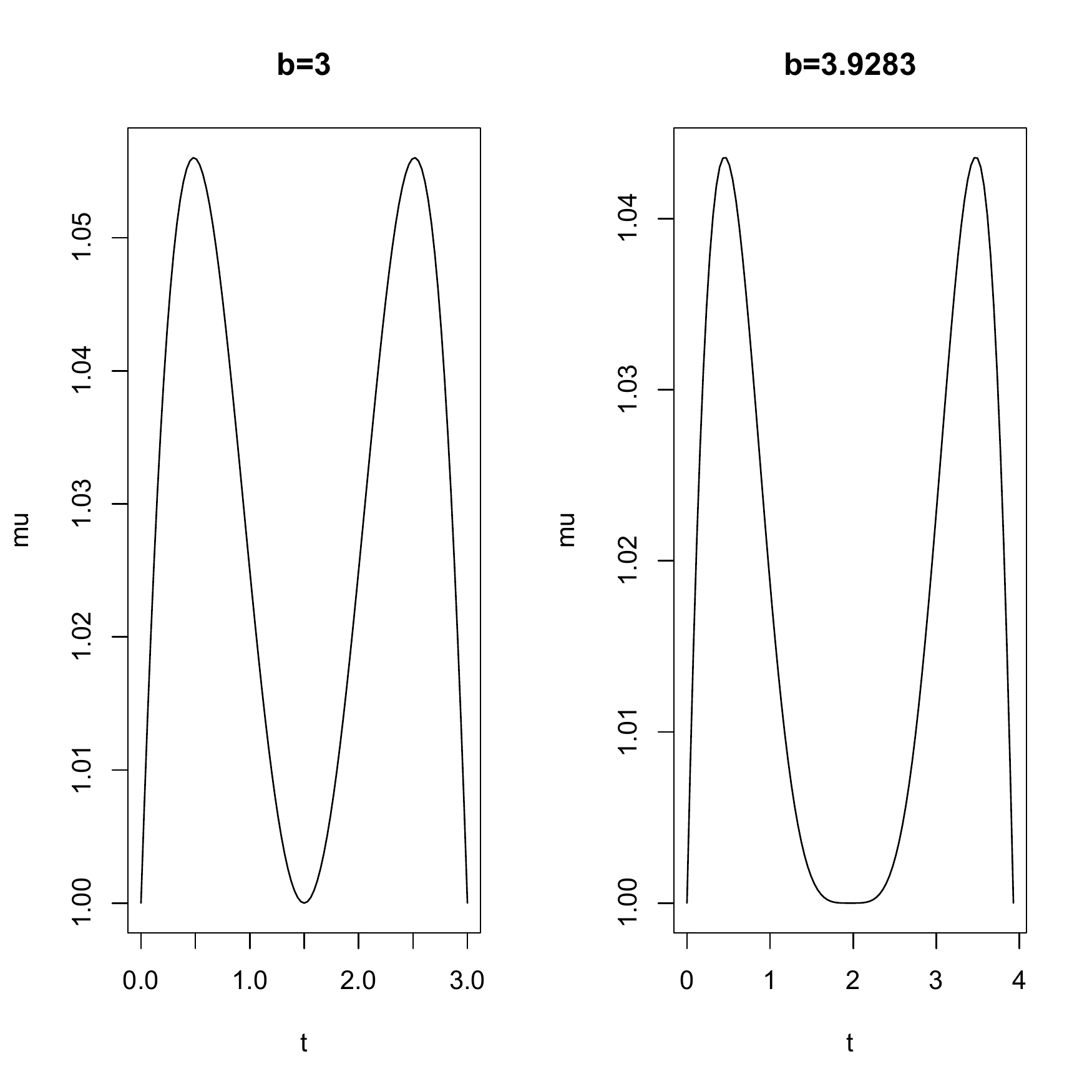}
\caption{Function $\mu$ for $b=3$ and $b=3.9283$ in Example \ref{ex:Gauss}}
\label{fig:case.1}
\end{figure}

The plots on Figure \ref{fig:case.1} illustrate the
different behaviour of the function $\mu$ when $c_1<b<c_2$, and when $b=c_2$.

The phenomenon exhibited in Example \ref{ex:Gauss} is very general and
holds for all smooth stationary Gaussian we have checked. For example,
for the Gaussian process with the uniform spectral measure and
covariance function $R(t)=\sin t/t, \, t\geq 0$ one gets a nearly
identical behaviour to the one observed for the process with the
Gaussian covariance function. The breakpoints are $c_1\approx 4.275$
and $c_2\approx 9.365$. 
}
\end{ex}

If a stationary Gaussian process satisfies S1 and S2, then the unique
optimizer $\nu_*$ of the minimization problem \eqref{p1.eq1} must be
symmetric around the midpoint of the interval $[a,b]$; indeed, for
any probability measure $\nu$ supported by $[a,b]$, the measure
$\tilde \nu$ obtained by reflection around the midpoint leads to the
same value in the integral and hence, by convexity,
$\nu_*=(\nu_*+\tilde\nu_*)/2$ is symmetric. Since the optimal measure
cannot be concentrated at the midpoint of the interval, we conclude
that the cardinality of the set $S$ in this case is at least 2.
However, for a non-stationary process, $S$ may be a singleton. The
next example illustrates this fact.

\begin{ex} \label{ex:sing} {\bf A non-stationary process} 
{\rm  

We start with a stationary centered Gaussian process $\BZ=(Z_t:t\in\bbr)$ with a spectral
measure $F_X$ satisfying S1 and S2.  
Let $Y$ be a standard normal random variable independent of $\BZ$. Define
\[
X_t:=Y(1+t^2-t^4)+Z_t-Z_0,\, t\in\bbr\,.
\]
Let $-1<a<0<b<1$. Note that the process $\BX:=(X_t:t\in\bbr)$ has
Property \ref{assume1}  by construction. It is elementary that the
covariance function $R$ of the process $\BX$ satisfies 
\eqref{assume4}. 

Clearly, for any probability measure $\nu$ on $[a,b]$,
\begin{equation} \label{e:lower.b.int}
\int_a^b\int_a^bR(s,t)\, \nu(ds)\,
\nu(dt)\ge\left(\int_a^b(1+t^2-t^4)\, \nu(dt)\right)^2\ge1\,.
\end{equation} 
Therefore, the process $\BX$ has Property \ref{assume2}. In order to
check that it also has Property \ref{assume3} , suppose that 
for distinct reals numbers $t_0,\ldots,t_k$ and  coefficients
$\alpha_0,\ldots,\alpha_k,\beta_0,\ldots,\beta_k$, we have 
\[
\sum_{j=0}^k\left(\alpha_jX_{t_j}+\beta_jX_{t_j}^{(1)}\right)=0\text{ almost surely}\,.
\]
Without loss of generality, we assume $t_0=0$. Using the independence
of $Y$ and $\BZ$ we see that 
\[
-Z_0\sum_{j=1}^k\alpha_j+\sum_{j=1}^k\alpha_jZ_{t_j}+\sum_{j=0}^k\beta_jZ_{t_j}^{(1)}=0\text{ almost surely}\,.
\]
Proposition \ref{p0} implies that
$\alpha_1=\ldots=\alpha_k=\beta_0=\ldots=\beta_k=0$. Thus, also
$\alpha_0=0$, and so process $\BX$ has Property \ref{assume3}.

Note that the choice $\nu=\delta_{\{0\}}$ is, by
\eqref{e:lower.b.int}, the optimal measure $\nu_*$. Thus,
$S=E=\{0\}$. That is, the support $S$  is a singleton
which contains none of the endpoints of the interval. That is, the
conclusion of Proposition \ref{p2} indeed fails without appropriate
assumptions on the covariance function of the process. 

Here
\[
\mu(t)= \E(X_t|X_0=1) = \E(X_t|Y=1) = 
1+t^2-t^4,\, t\in\bbr\,. 
\]
Therefore, $\mu^{(2)}(0)=2>0$. By Theorem \ref{t.q1},  
\[
\P\left(\min_{a\le t\le b}X_t>u\right)\sim\frac1{\sqrt{2\pi(1+\lambda_2/2)}}u^{-1}e^{-u^2/2}\,,
\]
as $u\to\infty$, where $\lambda_2$ is the second spectral moment of
$\BZ$. By Theorem \ref{t4}, conditionally on the event $\bigl\{ \min_{t\in
  [a,b]}X_{t}>u\bigr\} $, the scaled overshoot $u\bigl( \min_{t\in
  [a,b]}X_{t}-u\bigr)$  converges weakly, as $u\to\infty$, to the
standard exponential random variable. 
}
\end{ex}

\section{Proofs}\label{sec:proofs}  

We start with several preliminary results. First, an elementary
convergence statement, whose proof is omitted. 
\begin{lemma}\label{l3}
Suppose that three families of random variables ${\mathbb
  U}:=(U_{iN}:1\le i\le m,1\le N\le\infty)$, ${\mathbb
  V}:=(V_{iN}:1\le i\le n,1\le N<\infty)$ and ${\mathbb W}:=(W_i:1\le
i\le n)$ live on the same probability space, and that $\mathbb U$ and
$\mathbb W$ are independent. Suppose that, as $N\to\infty$, 
\begin{eqnarray*}
(U_{1N},\ldots,U_{mN})&\Rightarrow&(U_{1\infty},\ldots,U_{m\infty})\,,\\
V_{iN}&\prob&W_i\,,1\le i\le n\,.
\end{eqnarray*}
Then 
\[
\left(U_{1N},\ldots,U_{mN},V_{1N},\ldots,V_{nN}\right)\Rightarrow
\left(U_{1\infty},\ldots,U_{m\infty},\,   W_{1},\ldots,W_{n}\right) 
\]
as as $N\to\infty$, where the random vectors
$(U_{1\infty},\ldots,U_{m\infty})$ and $(W_{1},\ldots,W_{n})$ in the
right hand side are independent. 
\end{lemma}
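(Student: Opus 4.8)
The plan is to insert an intermediate random vector that has the correct limiting law for transparent reasons, and then transfer the conclusion to the vector of interest by a converging-together (Slutsky-type) argument. Concretely, I would first establish that
\[
\bigl(U_{1N},\ldots,U_{mN},W_1,\ldots,W_n\bigr)\Rightarrow\bigl(U_{1\infty},\ldots,U_{m\infty},W_1,\ldots,W_n\bigr)
\]
with the two blocks on the right independent, and then replace $(W_1,\ldots,W_n)$ by $(V_{1N},\ldots,V_{nN})$ in the left hand side.

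For the first step I would exploit that $\mathbb U$ is independent of $\mathbb W$ and that the law of $(W_1,\ldots,W_n)$ does not depend on $N$. Using characteristic functions, for $s\in\bbr^m$ and $t\in\bbr^n$,
\begin{align*}
&\E\exp\bigl(\iota\langle s,(U_{1N},\ldots,U_{mN})\rangle+\iota\langle t,(W_1,\ldots,W_n)\rangle\bigr)\\
&\qquad=\E\exp\bigl(\iota\langle s,(U_{1N},\ldots,U_{mN})\rangle\bigr)\,\E\exp\bigl(\iota\langle t,(W_1,\ldots,W_n)\rangle\bigr),
\end{align*}
by independence, and the first factor converges, by the assumed weak convergence of $(U_{1N},\ldots,U_{mN})$, to the characteristic function of $(U_{1\infty},\ldots,U_{m\infty})$. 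Hence the joint characteristic function converges to a product, which is exactly the characteristic function of the independent pairing on the right, establishing the displayed convergence together with the asserted independence of the two blocks in the limit. (Equivalently, one may phrase this as: the joint law is the product measure of the law of $(U_{1N},\ldots,U_{mN})$ with the fixed law of $(W_1,\ldots,W_n)$, and products of weakly convergent measures converge weakly to the product of the limits.)

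For the second step I would use that $V_{iN}\prob W_i$ for each $i$ gives $V_{iN}-W_i\prob0$, so that the vector $\bigl(0,\ldots,0,V_{1N}-W_1,\ldots,V_{nN}-W_n\bigr)$ converges to zero in probability. Since $\bigl(U_{1N},\ldots,U_{mN},W_1,\ldots,W_n\bigr)$ converges weakly to the target law by the first step, the multivariate converging-together lemma yields
\[
\bigl(U_{1N},\ldots,U_{mN},V_{1N},\ldots,V_{nN}\bigr)\Rightarrow\bigl(U_{1\infty},\ldots,U_{m\infty},W_1,\ldots,W_n\bigr),
\]
which is the claim.

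There is no genuine obstacle here; the only point requiring a moment's care is that the $V_{iN}$ converge in probability to a nondegenerate random variable rather than to a constant, so one cannot apply the elementary Slutsky theorem to the $V$-block directly. The device that circumvents this is precisely the introduction of the intermediate vector $(U_N,W)$, whose limit law already encodes the correct (independent) joint distribution; the convergence $V_{iN}-W_i\prob0$ is then used only to show that the $V$-vector and the $W$-vector are asymptotically indistinguishable in the converging-together lemma.
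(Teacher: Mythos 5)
Your proof is correct: the characteristic-function factorization of the law of $(U_{1N},\ldots,U_{mN},W_1,\ldots,W_n)$ via the independence of $\mathbb U$ and $\mathbb W$, followed by the multivariate converging-together step using $V_{iN}-W_i\prob 0$, is exactly the right device for handling the fact that the $W_i$ are random rather than constant. The paper omits the proof of this lemma as elementary, and your argument is the standard one that was evidently intended.
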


The next lemma is the first step in the proof of Theorem \ref{t.q1}. 
\begin{lemma}\label{l1}
Under the assumptions of Theorem \ref{t.q1}, 
for all $\vep>0$ and $n\ge0$ we have 
\begin{equation}
\label{l1.eq1}\lim_{u\to\infty}\P\left(\left.\sup_{a\le t\le
      b}\left|X^{(n)}_t-u\mu^{(n)}(t)-Z^{(n)}_t\right|>\vep\right| \min_{s\in
  S} X_s>u\right)=0\,.
\end{equation}
\end{lemma}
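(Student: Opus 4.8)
The plan is to reduce the supremum in \eqref{l1.eq1} to a deterministic, $u$-free linear combination of the increments $X_{t_j}-u$, $j=1,\dots,k$, and then to invoke the conditional tightness supplied by Proposition \ref{l:Gauss.vector}(ii). The guiding observation is that, for each fixed $u$, the object $X_t^{(n)}-u\mu^{(n)}(t)-Z_t^{(n)}$ is an explicit analytic function of $t$ whose only randomness sits in the finite vector $(X_{t_1},\dots,X_{t_k})$, and that this vector is pinned near $u$ under the conditioning.

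First I would set up the exact identity. Writing $\E(X_t\mid X_{t_1},\dots,X_{t_k})=\sum_{j=1}^k a_j(t)X_{t_j}=Y_t$ as in \eqref{eq.defy}, the regression coefficients solve $\Sigma\,(a_1(t),\dots,a_k(t))^T=(R(t,t_1),\dots,R(t,t_k))^T$. Since $R$ is analytic in each variable by Property \ref{assume1} and $\Sigma$ is invertible by Property \ref{assume3}, each $a_j$ is the restriction to $\bbr$ of an analytic function. Because the $a_j$ are deterministic, differentiating the representation of $Y_t$ pathwise $n$ times gives $Y_t^{(n)}=\sum_{j=1}^k a_j^{(n)}(t)X_{t_j}$, and from \eqref{eq.defz} one has $Z_t^{(n)}=X_t^{(n)}-Y_t^{(n)}$ since all three processes have $C^\infty$ sample paths. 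Moreover, as established in the derivation of $\mu=\hat\mu$ preceding Lemma \ref{l:mu.prop}, $\mu(t)=\sum_{j=1}^k a_j(t)$, and hence $\mu^{(n)}(t)=\sum_{j=1}^k a_j^{(n)}(t)$. Combining these identities yields the exact cancellation
\[
X_t^{(n)}-u\mu^{(n)}(t)-Z_t^{(n)} = Y_t^{(n)}-u\mu^{(n)}(t) = \sum_{j=1}^k a_j^{(n)}(t)\bigl(X_{t_j}-u\bigr)\,,
\]
valid for every $t\in\bbr$ and every realization.

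Next I would bound the supremum. Since each $a_j^{(n)}$ is continuous on the compact interval $[a,b]$, the constant $C_n:=\max_{1\le j\le k}\sup_{a\le t\le b}|a_j^{(n)}(t)|$ is finite, so that
\[
\sup_{a\le t\le b}\bigl|X_t^{(n)}-u\mu^{(n)}(t)-Z_t^{(n)}\bigr| \le C_n\sum_{j=1}^k |X_{t_j}-u|\,.
\]
This reduces \eqref{l1.eq1} to showing that $\sum_{j=1}^k|X_{t_j}-u|\to 0$ in the conditional probability given $\min_{s\in S}X_s>u$. That convergence is immediate from Proposition \ref{l:Gauss.vector}(ii): conditionally on $\{\min_{t\in S}X_t>u\}$, the vector $\bigl(u(X_{t_1}-u),\dots,u(X_{t_k}-u)\bigr)$ converges weakly to $(E_1,\dots,E_k)$ and is in particular tight, whence each $X_{t_j}-u=u^{-1}\cdot u(X_{t_j}-u)\to 0$ in conditional probability, and therefore so does $C_n\sum_j|X_{t_j}-u|$.

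There is no deep obstacle here: the statement is essentially an algebraic identity followed by a routine limit. The only step requiring care is the first one, namely verifying that the randomness of $X_t^{(n)}-u\mu^{(n)}(t)-Z_t^{(n)}$ is carried entirely by the finite vector $(X_{t_1}-u,\dots,X_{t_k}-u)$ through the analytic, $u$-independent coefficients $a_j^{(n)}$. Once this exact cancellation is in place, no analysis of the sample-path fluctuations of $\BX$ over the whole interval $[a,b]$ is needed, and the passage to the conditional limit is elementary.
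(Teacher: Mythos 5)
Your proof is correct and takes essentially the same route as the paper: both reduce $X_t^{(n)}-u\mu^{(n)}(t)-Z_t^{(n)}$ to the exact finite linear combination $\sum_{j=1}^k a_j^{(n)}(t)\bigl(X_{t_j}-u\bigr)$ with deterministic continuous coefficients, bound the supremum by a constant times $\sum_{j=1}^k|X_{t_j}-u|$, and conclude from the conditional convergence $X_{t_j}-u\prob 0$ given $\{\min_{s\in S}X_s>u\}$ supplied by Proposition \ref{l:Gauss.vector}(ii). Your added verification that the regression coefficients $a_j$ are analytic (via $\Sigma^{-1}$ and Property \ref{assume1}) merely makes explicit what the paper leaves implicit.
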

\begin{proof}
Notice that we can write 
\[
Y_t^{(n)} =\sum_{j=1}^k\alpha_j(t)X_{t_j}\,,
\]
$t\in \bbr$, 
for some continuous functions $\alpha_1,\ldots,\alpha_n$ from $\bbr$ to $\bbr$. Therefore,
\[
\mu^{(n)}(t)=\sum_{j=1}^k\alpha_j(t)\,,t\in\bbr\,,
\]
and probability in the left hand side of \eqref{l1.eq1} equals
\begin{eqnarray*}
&&\P\left(\left.\sup_{a\le t\le b}\left|Y^{(n)}_t-u\mu^{(n)}(t)\right|>\vep\right|\min_{s\in
  S} X_s>u\right)\\
&=&\P\left(\left.\sup_{a\le t\le b}\left|\sum_{j=1}^k\alpha_j(t)(X_{t_j}-u)\right|>\vep\right|\min_{s\in
  S} X_s>u  \right)\\
&\le&\P\left(\left.\sum_{j=1}^k|X_{t_j}-u|>\frac\vep{\max_{a\le t\le
      b,1\le j\le k}|\alpha_j(t)|}\right| \min_{s\in
  S} X_s>u \right)\\
&\to&0\,,
\end{eqnarray*}
as $u\to\infty$ because, conditionally on $\{\min_{s\in S} X_s>u\}$,
$X_{t_j}-u\prob0$ for each $j=1,\ldots, k$ by part (ii) of Proposition \ref{l:Gauss.vector}. 
\end{proof}

The next theorem  is the crucial step towards proving Theorem
\ref{t.q1}. Its statement uses Lemma \ref{l:mu.prop}. Below and elsewhere, we follow the convention 
$\infty\cdot0=0\cdot\infty=0$.

\begin{theorem} \label{t1}
Suppose that the assumptions of Theorem \ref{t.q1} are satisfied, and
let $t\in S$. 

(i)  Suppose that $t\in (a,b)$. Let 
$\vep>0$ be such that $[t-\vep,t+\vep]\subset[a,b]$ and
\begin{equation}
\label{t1.assume1}\mu^{(2)}(s)>0\text{ for all }0<|s-t|\le\vep\,.
\end{equation}
Then, as $u\to\infty$, conditionally on the event $\{\min_{s\in S} X_s>u\}$, 
\begin{equation} \label{e:ins.S}
u\left(X_t-\min_{s\in[t-\vep,t+\vep]}X_s\right)
\prob\frac1{2\mu^{(2)}(t)}\left(Z_t^{(1)}\right)^2\,,
\end{equation}
and, as before, we interpret 
the right hand side as $+\infty$ if $\mu^{(2)}(t)=0$.

(ii)  Suppose that $t=a$. Then either $\mu^{(1)}(a)>0$  or
$\mu^{(1)}(a)=0$ and $\mu^{(2)}(a)\geq 0$. Then for all $\vep>0$
small enough, as $u\to\infty$, conditionally on the
event $\{ \min_{s\in S} X_s>u\}$, 
\begin{eqnarray*}
&&u\left(X_a-\min_{s\in[a,a+\vep]}X_s\right)\\
&\prob&\begin{cases}
0\,,&\mu^{(1)}(a)>0\,,\\
\frac1{2\mu^{(2)}(a)}\left(Z_a^{(1)}\right)^2\one\left(Z_a^{(1)}<0\right)\,,&\mu^{(1)}(a)=0,
\, \mu^{(2)}(a)>0\,,\\
\infty\one\left(Z_a^{(1)}<0\right)\,,&\mu^{(1)}(a)=0, \, \mu^{(2)}(a)=0\,.
\end{cases}
\end{eqnarray*}

(iii)   Suppose that $t=b$. Then either $\mu^{(1)}(b)<0$  or
$\mu^{(1)}(b)=0$ and $\mu^{(2)}(b)\geq 0$. Then for all $\vep>0$ small
enough, as $u\to\infty$, conditionally on the
event $\{ \min_{s\in S} X_s>u\}$, 
\begin{eqnarray*}
&&u\left(X_b-\min_{s\in[b-\vep,b]}X_s\right)\\
&\prob&
\begin{cases}
0\,,&\mu^{(1)}(b)<0\,,\\
\frac1{2\mu^{(2)}(b)}\left(Z_b^{(1)}\right)^2\one\left(Z_b^{(1)}>0\right)\,,&\mu^{(1)}(b)=0,
\, \mu^{(2)}(b)>0\,,\\
\infty\one\left(Z_b^{(1)}>0\right)\,,&\mu^{(1)}(b)=0, \, \mu^{(2)}(b)=0\,.
\end{cases}
\end{eqnarray*}
\end{theorem}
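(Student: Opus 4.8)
The plan is to localize the analysis near $t$ at the spatial scale $1/u$ and to linearize the minimization into that of a random parabola. I would work throughout conditionally on $\{\min_{s\in S}X_s>u\}$, using that $\BZ$ is independent of this event, so that $Z_t^{(1)}$ retains its unconditional Gaussian law; in particular, since this law is nondegenerate by Property \ref{assume3}, we have $Z_t^{(1)}\neq 0$ almost surely. The only input about $\BX$ that I need comes from Lemma \ref{l1}: conditionally, $\sup_{a\le s\le b}|X_s^{(n)}-u\mu^{(n)}(s)-Z_s^{(n)}|\prob 0$ for every $n$. Together with the identities $\mu(t)=1$ and $\mu^{(1)}(t)=0$ supplied by Lemma \ref{l:mu.prop}, this yields the conditional limits $X_t^{(1)}\prob Z_t^{(1)}$ and $u^{-1}X_t^{(2)}\prob\mu^{(2)}(t)$, together with the bound that $u^{-1}\sup_{|h|\le\vep}|X_{t+h}^{(3)}|$ is bounded in probability.

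The decisive device, and the step I expect to be the main obstacle, is that one must \emph{not} substitute $X_s\approx u\mu(s)+Z_s$ directly: since the quantities of interest are differences $X_t-X_s$ of order $1/u$ that are then multiplied by $u$, the $o_P(1)$ errors of Lemma \ref{l1} would be amplified to $o_P(u)$ and would overwhelm the answer. The remedy is to expand $\BX$ itself by Taylor's theorem with Lagrange remainder, so that the randomness sits in the coefficients $X_t^{(j)}$, which converge genuinely. Concretely, for part (i) with $\mu^{(2)}(t)>0$ I set $\phi(h):=X_t-X_{t+h}=-X_t^{(1)}h-\tfrac12 X_t^{(2)}h^2-\tfrac16 X_\xi^{(3)}h^3$ and substitute $h=w/u$, giving $u\,\phi(w/u)=-X_t^{(1)}w-\tfrac{1}{2u}X_t^{(2)}w^2-\tfrac{1}{6u^2}X_\xi^{(3)}w^3$. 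By the limits above this converges, uniformly on compact $w$-sets, to $-Z_t^{(1)}w-\tfrac12\mu^{(2)}(t)w^2$, whose maximum over $\bbr$ equals $(Z_t^{(1)})^2/(2\mu^{(2)}(t))$ and is attained at $w^\ast=-Z_t^{(1)}/\mu^{(2)}(t)$. Since $u\bigl(X_t-\min_{[t-\vep,t+\vep]}X\bigr)=\max_{|w|\le u\vep}u\,\phi(w/u)$, it remains to localize the maximizer.

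For the localization I would split $|h|\le\vep$ into three regimes. First, $\mu^{(1)}(t)=0$ together with \eqref{t1.assume1} forces $\mu>1$ on $[t-\vep,t+\vep]\setminus\{t\}$, so for fixed $\delta\le|h|\le\vep$ one has $\mu(t+h)-1\ge c(\delta)>0$, whence $X_{t+h}-X_t\approx u\,c(\delta)\to+\infty$ and $\phi(h)\to-\infty$. Second, for $\eta/u\le|h|\le\delta$, using $u^{-1}X_t^{(2)}\ge\tfrac12\mu^{(2)}(t)$ with high probability and the uniform cubic bound, one obtains $\phi(h)\le|X_t^{(1)}||h|-\tfrac14\mu^{(2)}(t)uh^2+\tfrac16 Cu|h|^3$, which for $\delta$ small (cubic absorbed into the quadratic) and $\eta$ large (quadratic beating the linear term) is negative, hence cannot host the maximizer since $\phi(w^\ast/u)>0$ with high probability. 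Thus the maximizer lies in $|w|\le\eta$ with probability tending to one, where the uniform convergence of the previous paragraph applies and yields \eqref{e:ins.S}.

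The remaining cases are variations on the same scheme. When $\mu^{(2)}(t)=0$ the stated limit is $+\infty$: expanding only to second order with Lagrange remainder and testing the single point $h=-\mathrm{sgn}(Z_t^{(1)})u^{-\beta}$ with $\beta\in(1/2,1)$, in $u\,\phi(h)$ the linear contribution $-uX_t^{(1)}h$ is of order $u^{1-\beta}|Z_t^{(1)}|\to+\infty$, while the second-order term (whose coefficient $X_t^{(2)}$ is now only bounded in probability) and the remainder are of orders $u^{1-2\beta}$ and $u^{2-3\beta}$, both negligible for $\beta>1/2$; hence $u\,\phi(h)\to+\infty$. For the endpoints (ii) and (iii) the dichotomy on $\mu^{(1)}$ is precisely Lemma \ref{l:mu.prop}, and the only change is that $h$ runs over a one-sided interval. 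If $\mu^{(1)}(a)>0$ then $X_a^{(1)}\approx u\mu^{(1)}(a)\to+\infty$ makes $\phi(h)<0$ for every $h\in(0,\vep]$, so $\max\phi=\phi(0)=0$ and the overshoot tends to $0$; if $\mu^{(1)}(a)=0$ the limiting parabola $-Z_a^{(1)}w-\tfrac12\mu^{(2)}(a)w^2$ is maximized over $w\ge0$, which produces the factor $\one(Z_a^{(1)}<0)$ and, when $\mu^{(2)}(a)=0$, the value $+\infty$ through the same one-sided blow-up estimate with $\beta\in(1/2,1)$. Part (iii) is the mirror image under $h\mapsto-h$.
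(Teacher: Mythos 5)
Your route is genuinely different from the paper's. The paper never rescales space: it works with the random minimizer $t_*$ of $X$ on $[t-\vep,t+\vep]$ directly, shows that conditionally $X^{(1)}_{t_*}=0$ with probability tending to one and $t_*\prob t$, and then inverts two Taylor expansions (of $X^{(1)}_{t_*}$ and of $X_{t_*}-X_t$ around $t$) to get the rate $u(t_*-t)^{n-1}\prob -Z_t^{(1)}(n-1)!/\mu^{(n)}(t)$, where $n$ is the order of the first nonvanishing derivative of $\mu$ at $t$; the degenerate case $\mu^{(2)}(t)=0$ is simply $n>2$ and comes out of the same computation. Your scheme --- freeze the Taylor coefficients at $t$, substitute $h=w/u$, obtain locally uniform convergence of $u\phi(w/u)$ to the parabola $-Z_t^{(1)}w-\tfrac12\mu^{(2)}(t)w^2$, and localize the maximizer by the three-regime estimate --- is correct for part (i), including the degenerate case via the test point $h=-\mathrm{sgn}(Z_t^{(1)})u^{-\beta}$ (with the caveat that the remainder of order $u^{2-3\beta}$ is negligible only \emph{relative to} the linear term $u^{1-\beta}$, which is exactly the condition $\beta>1/2$), and also for the endpoint cases $\mu^{(1)}(a)>0$ and $\mu^{(1)}(a)=0<\mu^{(2)}(a)$. (Minor point: when $\mu^{(1)}(a)>0$, positivity of $X^{(1)}_a$ alone does not control $\phi$ on $(0,\vep]$; as in the paper, you need Lemma \ref{l1} at level $n=1$ on the whole interval, with $\vep$ small enough that $\min_{[a,a+\vep]}\mu^{(1)}>0$ --- consistent with your general scheme, but worth saying.)

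There is, however, one genuine gap: the doubly degenerate endpoint case $\mu^{(1)}(a)=\mu^{(2)}(a)=0$ on the event $\{Z_a^{(1)}>0\}$. There the claimed limit $\infty\cdot\one(Z_a^{(1)}<0)$ equals $0$, so you must prove $u\bigl(X_a-\min_{s\in[a,a+\vep]}X_s\bigr)\prob 0$ on $\{Z_a^{(1)}>0\}$, and your machinery gives nothing for this branch: the blow-up test point only produces the $+\infty$ on $\{Z_a^{(1)}<0\}$, while the middle-regime localization fails precisely because its quadratic coercivity $u^{-1}X_a^{(2)}\ge\tfrac12\mu^{(2)}(a)>0$ is what is now missing (here $X_a^{(2)}=O_P(1)$, and the minimizer in fact localizes at scale $u^{-1/(n-1)}$, not $1/u$). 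One must rule out that a path with $X^{(1)}_a>0$ nevertheless dips below $X_a$ by more than $o_P(1/u)$ somewhere in $(a,a+\vep]$; this is exactly the paper's event $B=\{X^{(1)}_a>0>\min_{s\in[a,a+\vep]}X^{(1)}_s\}$, for which it proves \eqref{t1.eq9} by a contradiction argument at the first zero $s_*$ of $X^{(1)}$, using \eqref{t1.eq10}, \eqref{t1.eq11} and the same pair of Taylor expansions. The gap is fillable within your framework --- for instance, write $X_{a+h}-X_a=\int_a^{a+h}X^{(1)}_s\,ds$, use Lemma \ref{l1} at level $1$ together with $\mu^{(1)}\ge0$ near $a$ (from Lemma \ref{l:mu.prop}) to get $X_{a+h}-X_a\ge h\bigl(Z_a^{(1)}-\sup_{a\le s\le a+\delta}|Z^{(1)}_s-Z^{(1)}_a|-o_P(1)\bigr)$ uniformly in $0<h\le\delta$, and then let $\delta\downarrow0$ using continuity of $Z^{(1)}$ --- but as written your proposal asserts the indicator without proving its zero branch.
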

\begin{proof}
The claims of parts (ii) and (iii) are similar, so we will only prove
the claims of parts (i) and (ii). We start with part (i). Fix $t\in S\cap(a,b)$ and $\vep>0$ such that $[t-\vep,t+\vep]\subset[a,b]$ and \eqref{t1.assume1} holds. Define
\[
t_*:=\arg\min_{t-\vep\le s\le t+\vep}X_s\,,
\]
taken to be the closest to $t$ location of the minimum in case there are
ties. We first check that
\begin{equation}
\label{t1.eq1}\lim_{u\to\infty}\P\left(\left.X^{(1)}_{t_*}=0\right|\min_{s\in
    S}S_s>u\right)=1\,.
\end{equation}
To see this, note that by Lemma \ref{l1}, 
\begin{eqnarray*}
\P\left(\left.X^{(1)}_{t_*}=0\right|\min_{s\in S}X_s>u\right)&\ge&
\P\left(\left.X^{(1)}_{t+\vep}>0,\,X^{(1)}_{t-\vep}<0\right|\min_{s\in S}X_s>u\right)\\
&\to&1\,,
\end{eqnarray*}
as $u\to\infty$. Here we have used the fact that by
\eqref{t1.assume1}, 
\[
\mu^{(1)}(t-\vep)<0<\mu^{(1)}(t+\vep)\,.
\]
Keeping the definition of $t_*$ unchanged, but replacing $\vep$ by
arbitrarily small $0<\vep^\prime<\vep$ in the above argument, shows
that, as $u\to\infty$, conditionally on the event $\{ \min_{s\in S}X_s>u\}$,
\begin{equation}
\label{t1.eq2} t_*\prob t\,.
\end{equation}
By Lemma \ref{l:mu.prop}  there exists an even positive integer $n$ such that
\begin{equation}
\label{t1.eq3}\mu^{(1)}(t)=\ldots=\mu^{(n-1)}(t)=0<\mu^{(n)}(t)\,.
\end{equation}
Consider the series expansion 
\begin{equation}
\label{t1.eq5}X^{(1)}_{t_*}=X_t^{(1)}+\sum_{j=2}^{n-1}X^{(j)}_t\frac{(t_*-t)^{j-1}}{(j-1)!}+X^{(n)}_{\xi_1}\frac{(t_*-t)^{n-1}}{(n-1)!}\,,
\end{equation}
for some $\xi_1$ in between $t$ and $t_*$. By Lemma \ref{l1} and
\eqref{t1.eq3} we know that, as $u\to\infty$, conditionally on the
event $\{ \min_{s\in S}X_s>u\}$, 
\begin{equation}
\label{l1.cons1}X_t^{(j)}\prob Z_t^{(j)}\,,1\le j\le n-1\,.
\end{equation}

The above along with \eqref{t1.eq1}, \eqref{t1.eq2} and \eqref{t1.eq5}
imply that, as $u\to\infty$, conditionally on the
event $\{ \min_{s\in S}X_s>u\}$, 
\[
X^{(n)}_{\xi_1}\frac{(t_*-t)^{n-1}}{(n-1)!}\prob-Z_t^{(1)}\,.
\]
Furthermore, by Lemma \ref{l1}, we also have 
\begin{equation}
\label{l1.cons2}u^{-1}X^{(n)}_{\xi_1}\prob\mu^{(n)}(t)\,.
\end{equation}
Therefore, 
\begin{equation}
\label{t1.eq6}u(t_*-t)^{n-1}\prob-Z_t^{(1)}\frac {(n-1)!}{\mu^{(n)}(t)}
\,.
\end{equation}
Next, we use the series expansion 
\begin{equation} \label{e:sec.exp}
X_{t_*}-X_t=\sum_{j=1}^{n-1}X^{(j)}_t\frac{(t_*-t)^{j}}{j!}+X^{(n)}_{\xi_2}\frac{(t_*-t)^{n}}{n!}\,,
\end{equation} 
for some $\xi_2$ in between $t$ and $t_*$. Since \eqref{l1.cons2} also
holds with $\xi_2$ replacing $\xi_1$, we conclude by \eqref{l1.cons1}
with $j=1$ and \eqref{t1.eq6} that 
\begin{eqnarray}
&&u^{1/(n-1)}\left(X_{t_*}-X_t\right) \notag \\
&=&u^{1/(n-1)}(t_*-t)X_t^{(1)}+\left(u^{-1}X^{(n)}_{\xi_2}\right)\frac{u^{n/(n-1)}(t_*-t)^n}{n!}\notag
  \\
&&+u^{1/(n-1)}\sum_{j=2}^{n-1}X^{(j)}_t\frac{(t_*-t)^{j}}{j!}\notag \\
&\prob&-|Z^{(1)}_t|^{n/(n-1)}\frac1{(\mu^{(n)}(t))^{1/(n-1)}}((n-1)!)^{1/(n-1)}\frac{n-1}n
        \,. \label{e:two.exp}
\end{eqnarray}
When $n=2$, this reduces to \eqref{e:ins.S}.
If $n>2$, i.e. if $\mu^{(2)}(t)=0$, the above limit says that 
\begin{equation*}
u(X_{t_*}-X_t)\prob-\infty\,,
\end{equation*}
which is, again, \eqref{e:ins.S}. This completes the proof of part
(i). 

We now prove part (ii) of the theorem. 
The claim will be proved separately for the three cases listed in the
statement. We start with the case   $\mu^{(1)}(a)>0$. For all 
$\vep>0$ small enough, such that $a+\vep\leq b$ and 
$\min_{s\in[a,a+\vep]}\mu^{(1)}(s)>0$, we have by Lemma \ref{l1}, 
\begin{eqnarray*}
  &&\P\left(\left.X_a=\min_{s\in[a,a+\vep]}X_s\right|\min_{s\in S}
  X_s>u\right)\\
  &\ge&\P\left(\left.\min_{s\in[a,a+\vep]}X_s^{(1)}>0\right|\min_{s\in
                    S}X_s>u\right)\\ 
&\to&1\,,
\end{eqnarray*}
as $u\to\infty$, which proves the claim of part (ii) in the case
$\mu^{(1)}(a)>0$. 

Suppose now that $\mu^{(1)}(a)=0$.  By Lemma \ref{l:mu.prop}, we can
choose  $\vep>0$ such that
\begin{equation}
\label{t1.eq8}\mu^{(2)}(s)>0\text{ for all }a<s\le a+\vep\,.
\end{equation}
Consider the event
\[
B:=\left\{X^{(1)}_a>0>\min_{s\in[a,a+\vep]}X_s^{(1)}\right\}\,.
\]
Our first claim is that
\begin{equation}
\label{t1.eq9}\lim_{u\to\infty}\P\left( B\left|\min_{s\in S}X_s>u\right.\right)=0\,.
\end{equation}
Indeed, on the event $B$,  the derivative $X^{(1)}_{s}$ crosses 0 in
the interval  $[a,a+\vep]$. If we define a random variable 
\[
s_*:=\inf\{s\in[a,a+\vep]:X^{(1)}_{s}=0\}\,,
\]
then 
\begin{equation}
\label{t1.eq10}X^{(1)}_{s_*}\, \one_B=0\,.
\end{equation}
Furthermore, the definition of $s_*$ tells us that
\begin{equation}
\label{t1.eq11}\left(X_{s_*}-X_a\right)\one_B\ge0\,.
\end{equation}
Note that by  \eqref{t1.eq8} the second derivative $\mu^{(2)}$ is
bounded away from 0 on any interval $[a+\delta, a+\vep]$ for
$0<\delta<\vep$. It follows from Lemma \ref{l1} that 
\[
\P\left( \left.\min_{a+\delta\leq t\leq a+\vep}X^{(2)}_t<0\right|
  \min_{s\in S}X_s>u\right) \to 0
\]
 as $u\to\infty$. Therefore, conditionally on the event $\{ \min_{s\in
   S}X_s>u\}$, as $u\to\infty$, 
\[
(s_*-a)\, \one_B\prob0\,.
\]
Using twice the Taylor expansion and imitating the steps leading to
\eqref{t1.eq6} and \eqref{e:two.exp}, in conjunction with
\eqref{t1.eq10}, shows that 
\[
\lim_{u\to\infty}\P\left(X_{s_*}<X_a\Bigl| B\cap\{\min_{s\in  S}\{X_s>u\}
\right)=1\,.
\]
This would contradict \eqref{t1.eq11} if \eqref{t1.eq9} were false. Thus \eqref{t1.eq9} follows. 

Write
\begin{eqnarray*}
&&u\left(X_a-\min_{s\in[a,a+\vep]}X_s\right)\\
&=&u\left(X_a-\min_{s\in[a,a+\vep]}X_s\right)\one(X_a^{(1)}<0)\\
&+&u\left(X_a-\min_{s\in[a,a+\vep]}X_s\right)\one(\{X_a^{(1)}>0\}\setminus B)\\
&+&u\left(X_a-\min_{s\in[a,a+\vep]}X_s\right)\one(\{X_a^{(1)}>0\}\cap B)\,.
\end{eqnarray*}
By the definition of the event $B$, the middle term in the right
hand side is equal to zero, while by 
\eqref{t1.eq9}, the last term in the right hand side goes to zero
in probability.  It remains, therefore, to consider the first term in the right
hand side. By the assumption $\mu^{(1)}(a)=0$ and Lemma  \ref{l1} we
know that $\one(X_a^{(1)}<0)\to \one(Z_a^{(1)}<0)$ in probability. For
the rest of the that term the same analysis as the one used in the
proof of part (i) applies. Specifically, we use the two Taylor
expansions \eqref{t1.eq5} and \eqref{e:sec.exp}. The only difference
between the two scenarios is that now the integer $n$ does not need to
be an even number, but it plays no role in the argument. 

This completes the proof of the theorem in all cases. 
\end{proof}



We have now all the ingredients needed to prove Theorem \ref{t.q1}. 

\begin{proof}[Proof of Theorem \ref{t.q1}]
A restatement of \eqref{t.q1.eq1} is
\begin{equation}\label{t2.eq1}
\lim_{u\to\infty}\P\left(\min_{t\in [a,b]}X_t>u\Big|\min_{t\in S}X_t>u\right)=\E W\,,
\end{equation}
which we proceed to show first.
For $u>0$ let $(\tilde Y_t^{(u)}:t\in\bbr)$ be a process with
continuous sample paths whose law is the law of the process $\BY$ in
\eqref{eq.defy} conditioned on the event $\{ \min_{s\in S}X_s>u\}$, and
let this process be independent of the process $\BZ$ in \eqref{eq.defz}.
 Define
\[
\tilde X_t^{(u)}:=\tilde Y_t^{(u)}+Z_t, \, t\in\bbr\,.
\]
Let $\vep>0$ be small enough such that the convergence in probability
in Theorem \ref{t1} holds. Continuing using the notation $S=\{
t_1,\ldots, t_k\}$ and $E\setminus S=\{ t_{k+1}, \ldots, t_{k+l}\}$, 
we define for $1\le j\le k$ 
\begin{equation*}
V_{ju}:=u\left(\tilde
           X^{(u)}_{t_j}-\min_{s\in[t_j-\vep,t_j+\vep]\cap[a,b]}\tilde
           X_s^{(u)}\right)\,, 
\end{equation*}
and 
\[
V_{k+1,\,u}:=
\inf_{s\in G}\left[\tilde X^{(u)}_{s}-u\mu(s)\right]\,,
\]
where
\begin{equation}
\label{eq.defg}G:=\{s\in[a,b]:|s-t_j|\le\vep\text{ for some }k+1\le j\le l\}\,,
\end{equation}
with the convention that infimum over the empty set is defined as
$-\infty$.  

For $j=1,\ldots, k$ we denote by $W_j$ (not to be confused with $W_a$, $W_b$ or $W_{(a,b)}$) the limit in probability of
\[
T_{ju}:=u\left(X_{t_j}-\min_{s\in[t_j-\vep,t_j+\vep]\cap[a,b]} X_s\right)\,,
\]
as $u\to\infty$, conditionally on the event $\{\min_{s\in S}X_s>u\}$,
given in Theorem \ref{t1}.  Recall that $W_j$ may take the value
$+\infty$. We define also 
\[
U_{ju}:=u\bigl(\tilde X^{(u)}_{t_j}-u\bigr)\,,1\le j\le k\,.
\]

Clearly, the conditional law of 
\[
 \Bigl(u(X_{t_1}-u),\ldots,u(X_{t_k}-u),T_{1u},\ldots,T_{ku},\, \min_{s\in
   G}\left[ X_{s}-u\mu(s)\right]\Bigr) 
\]
given $\{\min_{s\in S}X_s>u\}$ coincides with the law of 
$$
\Bigl(U_{1u},\ldots,U_{ku},V_{1u},\ldots,V_{k+1,\,u}\Bigr)\,.
$$
By Theorem \ref{t1} we know that for fixed $1\le j\le k$, 
\begin{equation} \label{t2.eq4}
  V_{ju}\prob W_j 
\end{equation}
as $u\to\infty$, where we regard $W_j$ as a function of the process
$\BZ$ in the definition of $(V_{ju})$. Furthermore, by 
Proposition \ref{l:Gauss.vector}, 
\begin{equation}
(U_{1u},\ldots,U_{ku})\Rightarrow(E_1,\ldots,E_k)
\end{equation}
as $u\to\infty$, where $E_1,\ldots, E_k$ are independent exponential
random variables with parameters given by \eqref{e:deftheta}. 
Finally, Lemma \ref{l1} implies that as $u\to\infty$,
\begin{equation}
\label{t2.eq5}V_{k+1,\,u}\prob\min_{s\in G}Z_s\,.
\end{equation}
We apply now Lemma \ref{l3} to conclude that, as $u\to\infty$,
\[
\Bigl(U_{1u},\ldots,U_{ku},V_{1u},\ldots,V_{k+1,\,u}\Bigr)\Rightarrow\Bigl(E_1,\ldots,E_k,W_1,\ldots,W_k,\min_{s\in G}Z_s\Bigr)\,,
\]
with $(E_1,\ldots, E_k)$ being independent of the rest of the random
variables in the right hand side, which implies that, as $u\to\infty$,   
\[
\Bigl(u(X_{t_1}-u)-T_{1u},\ldots,u(X_{t_k}-u)-T_{ku},\, \min_{s\in
  G}\left[ X_{s}-u\mu(s)\right]\Bigr|\min_{s\in S}X_s>u\Bigr)
\]
\begin{equation}
\label{t2.eq6}
\Rightarrow\Bigl(E_1-W_1,\ldots,E_k-W_k,\, \min_{s\in G}Z_s\Bigr)\,,
\end{equation}
weakly on $\bbr^{k+1}$, with the obvious interpretation if some of the
$W_j$ take the value $+\infty$. If we denote 
\begin{equation}
\label{eq.defh}H:=\{s\in[a,b]:|s-t_j|\le\vep\text{ for some }1\le j\le
k\}\,, 
\end{equation}
then it follows by the continuous mapping theorem that, as $u\to\infty$,
\begin{eqnarray}
\nonumber&&\left(\left.\min\left\{u\left(\min_{s\in H}X_s-u\right),\, \min_{s\in G}\left[
    X_{s}-u\mu(s)\right]\right\} \right|\min_{s\in S}X_s>u\right)\\
\label{eq.conv}&\Rightarrow&\min\left\{E_1-W_1,\ldots,E_k-W_k,\,
  \min_{s\in G}Z_s\right\}\,. 
\end{eqnarray}
Note that \eqref{eq.conv} continues to hold if we use $\vep=0$ in the
definition of $G$ (but not in the definition of $H$). 

Since the function $\mu$ is bounded away from 1 on  $[a,b]\setminus
(G\cup H)$, Lemma \ref{l1} implies that 
\[
\lim_{u\to\infty}\P\left(\min_{s\in[a,b]\setminus(G\cup
    H)}X_s>u\Bigr|\min_{s\in S}X_s>u\right)=1\,.
\]
Together with the fact that $\mu(s)\ge1$ for all $s$, this implies that 
\begin{align*}
&\liminf_{u\to\infty}\P\left(\min_{t\in [a,b]}X_t>u\Bigr|\min_{s\in S}X_s>u\right)\\
=&\liminf_{u\to\infty}\P\left(\min_{s\in G\cup H}X_s>u\Bigr|\min_{s\in
    S}X_s>u\right)\\
\ge&\lim_{u\to\infty}\P\left(\min\left\{u\left(\min_{s\in
      H}X_s-u\right),\, \min_{s\in G}\left[
      X_{s}-u\mu(s)\right]\right\}>0\Bigr|\min_{s\in S} X_s>u\right)\\
\geq &\P\left(E_1-W_1>0,\ldots, E_k-W_k>0, \, \min_{s\in
        G}Z_s>0\right) \\
=&  \E\left[\exp\left(-\sum_{j=1}^k\theta_jW_j\right)\one\left( \min_{s\in
        G}Z_s>0\right)\right]      \\
=&  \E\left[ W_{(a,b)}W_aW_b\, \one\left( \min_{s\in
        G}Z_s>0\right) \right]\,.
\end{align*}
 
We let now $\vep\downarrow0$ and use the monotone convergence theorem to
conclude that 
\[
\liminf_{u\to\infty}\P\left(\min_{t\in [a,b]}X_t>u\Bigr|\min_{t\in S}X_t>u\right) \ge \E W\,.
\]
On the other hand, 
\begin{eqnarray*}
&& \P\left(\min_{t\in [a,b]}X_t>u\Bigr|\min_{t\in S}X_t>u\right) \\
&\le&\P\left(\min_{s\in H\cup(E\setminus S)}X_s>u\Bigr|\min_{s\in S}X_s>u\right)\\
&=&\P\left(\min\left\{u\left(\min_{s\in H}X_s-u\right),\min_{s\in
    E\setminus S}\left[ X_{s}-u\mu(s)\right]\right\}>0\Bigr|\min_{s\in
    S}X_s>u\right)\\
&\to&\E W\,,
\end{eqnarray*}
the limit in the last line following from  \eqref{eq.conv} with 
   $\vep=0$ in the definition of $G$ and the fact that by Property 3
   the Gaussian random variables $Z_s,\, s\in E\setminus S$ are
   nondegenerate. Thus, \eqref{t2.eq1} follows. 


In view of \eqref{t.q1.eq1} and part (iii) of Proposition
\ref{l:Gauss.vector}, all that needs to be shown for \eqref{t.q1.eq2} is that 
\begin{equation}
\label{t.q1.q2}\sum_{i=1}^k\theta_i=\frac1{V_*}\,,
\end{equation}
where $V_*$ is the optimal value in \eqref{p1.eq1} which is strictly
positive by Property 2.  However, Theorem 5.1 of \ams\ implies that
\[
\min_{y\in\bbr^k:\min_{1\le i\le k}y_i\ge1}y^T\Sigma^{-1}y=\frac1{V_*}\,,
\]
and the unique minimizer is $\one$. This in conjunction with
\eqref{e:deftheta} establishes \eqref{t.q1.q2}. This completes the
proof. 

For the final claim,  recall from the definition that $W_{(a,b)}=0$ a.s.\ if  \eqref{t2.eq2} fails. It immediately follows that $\E W>0$ implies \eqref{t2.eq2}. For the converse, that is, the `if' part, suppose that \eqref{t2.eq2} holds. Then, $W_{(a,b)}>0$ a.s.\ . Property 3 implies that the collection $(Z_t:t\in\bbr\setminus S)\cup(Z_t^{(1)}:t\in\bbr)$ is linearly independent.
The random vector $(Z_a^{(1)},Z_b^{(1)},Z_{t_{k+1}},\ldots,Z_{t_l})$
has a multivariate normal law.  The linear independence implies that
\[
\P\left(Z_a^{(1)}>0,Z_b^{(1)}<0,\min_{k+1\le j\le l}Z_{t_j}>0\right)>0\,.
\]
It is trivial to check from \eqref{e:W} that on this event, $W=W_{(a,b)}$. Thus, the `if' part follows, which completes the proof.
\end{proof}

\begin{proof}[Proof of Theorem \ref{t3}]
Fix a Borel subset $B$ of $C[a,b]$ such that
\[
\P\left((Z_t:a\le t\le b)\in \partial B\right)=0\,,
\]
where $\partial B$ denotes the boundary of $B$ in the supremum norm
topology, and write 

\begin{align*}
 &\P\left((X_t-u\mu(t):a\le t\le b)\in B\left|\min_{t\in
                [a,b]}X_t>u\right.\right)\\ 
\nonumber=&\frac{\P\left((X_t-u\mu(t):a\le t\le b)\in B,\, \min_{t\in
            [a,b]}X_t>u\left|\min_{t\in 
            S}X_t>u\right.\right)}{\P\left( \min_{t\in [a,b]}X_t>u\left|\min_{t\in
            S}X_t>u\right.\right)}\,.
\end{align*}
The denominator converges to $\E W$ 
by Theorem \ref{t.q1}, as $u\to\infty$, and it is positive since 
\eqref{t2.eq2} is assumed. Furthermore, the same argument as the one
used in the proof of  Theorem \ref{t.q1} gives us 
\begin{eqnarray*}
&&\lim_{u\to\infty} \P\left((X_t-u\mu(t):a\le t\le b)\in B,\, \min_{t\in
            [a,b]}X_t>u\left|\min_{t\in 
            S}X_t>u\right.\right)  \\
&=&  \E\left[ \one\left( \bigl( Z_t:a\le t\le
b\bigr)\in B\right)W\right]\,,
\end{eqnarray*}
and the statement of the theorem follows. 
\end{proof}

Theorems \ref{t4} and \ref{t5} are both based on the following result
that we prove first. 

\begin{theorem}\label{t4.new}
Under assumptions of Theorem \ref{t.q1}, assume also that
\eqref{t2.eq2} holds. For $\vep>0$ define
\[
M_{j\vep}:=\min_{s\in[t_j-\vep,t_j+\vep]\cap[a,b]}X_s, \, 1\le j\le k\,.
\]
Then
\begin{equation}
\label{t4.new.claim1}\lim_{\vep\downarrow0}
\liminf_{u\to\infty}\P\left(\left.\min_{1\le j\le k}M_{j\vep}=
    \min_{t\in [a,b]} X_t\right|  \min_{t\in [a,b]} X_t>u\right)=1\,.
\end{equation}
Furthermore, for $\vep>0$ small enough so that the convergence in all
parts of Theorem \ref{t1} holds,
as $u\to\infty$, conditionally on the event $\bigl\{ \min_{t\in [a,b]}
X_t>u\bigr\}$,  
\begin{equation}
\label{t4.new.claim2}\left(u(M_{1\vep}-u),\ldots,u(M_{k\vep}-u)\right)
\Rightarrow(E_1,\ldots,E_k)\,, 
\end{equation}
where $E_1,\ldots, E_k$ are independent exponential random variables
with respective parameters $\theta_1,\ldots, \theta_k$. 
\end{theorem}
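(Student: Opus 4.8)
The plan is to transfer everything from the conditioning event $\{\min_{t\in[a,b]}X_t>u\}$ to the finite-dimensional conditioning event $\{\min_{s\in S}X_s>u\}$, on which Theorem \ref{t1}, Lemma \ref{l1} and Proposition \ref{l:Gauss.vector} give complete control, and then to divide by the normalizer $\P(\min_{t\in[a,b]}X_t>u\mid\min_{s\in S}X_s>u)\to\E W$ supplied by \eqref{t2.eq1}, which is positive since \eqref{t2.eq2} is assumed. Using the decoupling construction from the proof of Theorem \ref{t.q1} (replacing $\BY$ by the independent conditioned copy $\tilde\BY^{(u)}$) together with Lemma \ref{l3}, I first record the joint convergence, conditionally on $\{\min_{s\in S}X_s>u\}$, of $\bigl(u(M_{1\vep}-u),\ldots,u(M_{k\vep}-u)\bigr)=\bigl(u(X_{t_1}-u)-T_{1u},\ldots,u(X_{t_k}-u)-T_{ku}\bigr)$ to $(E_1-W_1,\ldots,E_k-W_k)$, where $E_1,\ldots,E_k$ are the independent exponentials of Proposition \ref{l:Gauss.vector}(ii), independent of the $\BZ$-measurable limits $W_1,\ldots,W_k$ of the $T_{ju}$ furnished by Theorem \ref{t1}.

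The main new ingredient, and the step I expect to be the crux, concerns the set $G$ of \eqref{eq.defg}, the union of the $\vep$-neighborhoods of the points of $E\setminus S$. Here I claim the sharp statement that, conditionally on $\{\min_{s\in S}X_s>u\}$,
\[
\min_{s\in G}(X_s-u)\ \prob\ \min_{k+1\le j\le l}Z_{t_j}
\]
for each \emph{fixed} small $\vep>0$. The point is that $X_s-u=u(\mu(s)-1)+(X_s-u\mu(s))$, that $X_s-u\mu(s)\to Z_s$ uniformly by Lemma \ref{l1}, and that on $G$ the deterministic term $u(\mu(s)-1)$ is nonnegative and vanishes only at $t_{k+1},\ldots,t_l$; hence along any continuous realization of $\BZ$ the infimum is forced into shrinking neighborhoods of those points and converges to $\min_j Z_{t_j}$. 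This is genuinely sharper than the quantity $\min_G Z$ used in the proof of Theorem \ref{t.q1}, and it is precisely what lets $\vep$ stay fixed: the event $\{\min_{s\in G}X_s>u\}$ converges to $\{\min_{k+1\le j\le l}Z_{t_j}>0\}$ rather than merely being sandwiched as $\vep\downarrow0$. Feeding this into Lemma \ref{l3} alongside the previous display, and noting that the region $[a,b]\setminus(G\cup H)$ (with $H$ as in \eqref{eq.defh}) is asymptotically irrelevant because $\mu$ is bounded away from $1$ there, so $\min_{[a,b]\setminus(G\cup H)}X_s>u$ with conditional probability tending to $1$ by Lemma \ref{l1}, I obtain the joint limit of $\bigl((u(M_{j\vep}-u))_{j\le k},\ \min_{s\in G}(X_s-u)\bigr)$ toward $\bigl((E_j-W_j)_{j\le k},\ \min_{k+1\le j\le l}Z_{t_j}\bigr)$, with the $E_j$ independent of the $\BZ$-functionals.

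For \eqref{t4.new.claim2} I evaluate, for $x_1,\ldots,x_k>0$, the survival probability $\P\bigl(u(M_{j\vep}-u)>x_j\ \forall j\mid\min_{t\in[a,b]}X_t>u\bigr)$ as the ratio of $\P\bigl(u(M_{j\vep}-u)>x_j\ \forall j,\ \min_{t\in[a,b]}X_t>u\mid\min_{s\in S}X_s>u\bigr)$ to the normalizer above. On the event in the numerator the constraint $\min_{s\in H}X_s>u$ is automatic (since $x_j>0$ and $\min_jM_{j\vep}=\min_{s\in H}X_s$), so the full event reduces to $\{\min_{s\in G}X_s>u\}\cap\{\min_{[a,b]\setminus(G\cup H)}X_s>u\}$, the latter of conditional probability tending to $1$. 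Passing to the joint limit of the previous paragraph gives numerator $\to e^{-\sum_j\theta_j x_j}\,\E\bigl[e^{-\sum_j\theta_j W_j}\prod_{j=k+1}^l\one(Z_{t_j}>0)\bigr]$, using independence of the $E_j$ from the $\BZ$-functionals and $\P(E_j>x_j+W_j\mid\BZ)=e^{-\theta_j(x_j+W_j)}$. The explicit forms of the $W_j$ in Theorem \ref{t1} identify $e^{-\theta_j W_j}$ with the factors $W_{(a,b)}$, $W_a$, $W_b$ of \eqref{e:W}, so the expectation equals $\E W$ and cancels the normalizer, leaving $\prod_j e^{-\theta_j x_j}$. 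Since these survival values at all positive continuity points determine the law on $[0,\infty)^k$, \eqref{t4.new.claim2} follows.

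Finally, \eqref{t4.new.claim1} follows by the same ratio device: I show that the conditional probability, given $\{\min_{s\in S}X_s>u\}$, that $\min_{[a,b]}X$ is attained off $H$ while $\min_{[a,b]}X>u$ tends to $0$, after which division by $\E W$ preserves the conclusion. Attainment off $H$ together with $\min_{[a,b]}X>u$ forces $\min_{s\in G}X_s<\min_{s\in H}X_s$ or $\min_{[a,b]\setminus(G\cup H)}X_s<\min_{s\in H}X_s$, with all three minima exceeding $u$. But on $\{\min_{s\in H}X_s>u\}$ one has $\min_{s\in H}X_s-u=O_P(u^{-1})$, whereas by the crux step $u(\min_{s\in G}X_s-u)\to+\infty$ on $\{\min_{k+1\le j\le l}Z_{t_j}>0\}$, which carries the whole mass once $\min_{s\in G}X_s>u$ is imposed, and $\min_{[a,b]\setminus(G\cup H)}X_s\ge u(1+\delta)$ with conditional probability tending to $1$; both competing minima thus exceed $\min_{s\in H}X_s$ asymptotically. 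Hence the off-$H$ event is conditionally null in the limit for every fixed small $\vep>0$, which yields \eqref{t4.new.claim1} a fortiori.
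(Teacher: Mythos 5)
Your proposal is correct, and its overall skeleton is the paper's: transfer the conditioning to $\{\min_{s\in S}X_s>u\}$ via the decoupled pair $(\tilde\BY^{(u)},\BZ)$, get the joint limit $\bigl((u(X_{t_j}-u)-T_{ju})_j,\,\cdot\bigr)\Rightarrow\bigl((E_j-W_j)_j,\,\cdot\bigr)$ from Theorem \ref{t1}, Proposition \ref{l:Gauss.vector} and Lemma \ref{l3}, normalize by $\P(\min_{[a,b]}X_t>u\mid\min_S X_t>u)\to\E W>0$, and finish \eqref{t4.new.claim2} by conditioning on $\BZ$ and using memorylessness together with the identification $e^{-\sum_j\theta_jW_j}\prod_{j>k}\one(Z_{t_j}>0)=W$. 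Where you genuinely depart from the paper is in the treatment of the set $G$ of \eqref{eq.defg}: the paper only uses $\min_{s\in G}[X_s-u\mu(s)]\prob\min_{s\in G}Z_s$ (from Lemma \ref{l1}), the inequality $\mu\ge1$, and the ratio statement \eqref{t4.new.eq3}, which forces the double limit $\lim_{\vep\downarrow 0}\liminf_{u\to\infty}$ in \eqref{t4.new.claim1}; you instead prove the sharper fixed-$\vep$ limit $\min_{s\in G}(X_s-u)\prob\min_{k+1\le j\le l}Z_{t_j}$ by splitting $X_s-u=u(\mu(s)-1)+(X_s-u\mu(s))$ and exploiting that the nonnegative ramp $u(\mu(s)-1)$ vanishes on $G$ only at $t_{k+1},\ldots,t_l$, so the infimum localizes there pathwise. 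That step is valid (it needs only the uniform convergence of Lemma \ref{l1}, $\mu\ge1$ with $\{\mu=1\}=E$, and $\vep$ small enough that $G$ meets $E$ only at $t_{k+1},\ldots,t_l$), and it buys you something real: the event $\{\min_{s\in G}X_s>u\}$ converges exactly to $\{\min_{j>k}Z_{t_j}>0\}$, so you obtain \eqref{t4.new.claim1} in the stronger form $\lim_{u\to\infty}\P(\cdot)\to1$ for every fixed small $\vep$, with no $\vep\downarrow0$ sandwich, and the somewhat delicate bookkeeping around \eqref{t4.new.eq2}--\eqref{t4.new.eq3} is avoided. The paper's route, in exchange, never has to analyze the deterministic ramp near its zeros and stays entirely within the convergences \eqref{t2.eq5} and \eqref{t2.eq6} already established for Theorem \ref{t.q1}. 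Your comparison-of-minima argument for \eqref{t4.new.claim1} (the off-$H$ minimum loses because $\min_H X_s-u=O_P(u^{-1})$ while $u(\min_G X_s-u)\to+\infty$ on the surviving event and $\min_{[a,b]\setminus(G\cup H)}X_s\ge u(1+\delta)$ with conditional probability tending to one) is sound and, if anything, cleaner than the published version.
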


\begin{proof}
With the notation  $E\setminus S=\{ t_{k+1}, \ldots, t_{k+l}\}$ as
above and  the set $G$ defined in \eqref{eq.defg}, we first prove
that   
\begin{equation}
\label{t4.new.eq1} \lim_{\vep\downarrow0}\liminf_{u\to\infty}\P\left(\left.\min_{1\le
      j\le k}M_{j\vep}<\min_{s\in G}X_s\right| \min_{t\in [a,b]}X_t>u\right)=1
\end{equation}
 (note that the definition of $G$  depends on $\vep>0$).   

Let $\vep>0$ be small enough so that the convergence in all
parts of Theorem \ref{t1} holds.  As in the proof of Theorem \ref{t.q1},
we denote by  $W_j$ the limit in probability of 
\[
u\left(X_{t_j}-\min_{s\in[t_j-\vep,t_j+\vep]\cap[a,b]}X_s\right)\,,
\]
 as $u\to\infty$, conditionally on the event $\bigl\{ \min_{t\in
     S}X_t>u\bigr\}$.   It follows from \eqref{t2.eq6} that
\begin{align} \notag
& \ \P\left[\left.\left({u\left(\min_{1\le j\le
            k}M_{j\vep}-u\right)},\, {\min_{s\in
          G}[X_s-u\mu(s)]}\right)\in\cdot\right| \min_{t\in S}X_t>u\right] 
\\
\notag\Rightarrow & \ \P\biggl[ \left({\min_{1\le j\le
                                k}(E_j-W_j)},\, {\min_{s\in
                                 G}Z_s}\right)\in\cdot \biggr] \notag
\end{align}          
weakly in $\bbr^2$, as $u\to\infty$, which is almost a restatement of \eqref{eq.conv}.   Proceeding as in the proof of Theorem \ref{t3}, the minimum over $S$ in the conditioning event can be replaced by that over $[a,b]$, at the cost of appropriate corrections on the right hand side. Therefore,
it can be argued that
 \begin{align} \notag
& \ \P\left[\left.\left({u\left(\min_{1\le j\le
            k}M_{j\vep}-u\right)},\, {\min_{s\in
          G}[X_s-u\mu(s)]}\right)\in\cdot\right| \min_{t\in [a,b]}X_t>u\right] 
\\
 \label{t4.new.eq2}\Rightarrow & \ \frac{1}{\E W} \P\biggl[ \left({\min_{1\le j\le
                                k}(E_j-W_j)},\, {\min_{s\in
                                 G}Z_s}\right)\in\cdot, \\
&\hskip 1in     E_1-W_1>0,\ldots, E_k-W_k>0, \,
                                 \min_{j=k+1,\ldots, l}Z_{t_j}>0 \biggr] \notag
\end{align}
weakly in $\bbr^2$.
We conclude both that,  conditionally given $\bigl\{ \min_{t\in
  [a,b]}X_t>u\bigr\}$,  as $u\to\infty$, 
\begin{equation}
\label{t4.new.eq3}\frac{\min_{1\le j\le k}M_{j\vep}-u}{\min_{s\in G}[X_s-u\mu(s)]}\prob0
\end{equation}
and that 
$$
\P\left(\left.\min_{s\in G}[X_s-u\mu(s)]<0\right| \min_{t\in
    [a,b]}X_t>u \right) \to 0\,.
$$
Since $\mu(s)\ge1$ for all $s\in[a,b]$, it follows that
\begin{eqnarray*}
&&\P\left(\left.\min_{1\le j\le k}M_{j\vep}<\min_{s\in G}X_s\right|  
\min_{t\in  [a,b]}X_t>u\right)\\
&\ge&\P\left(\left.\min_{1\le j\le k}M_{j\vep}-u<\min_{s\in
      G}[X_s-u\mu(s)]\right|
\min_{t\in  [a,b]}X_t>u \right)\\
&\ge&\P\left(\left.\left|\frac{\min_{1\le j\le
      k}M_{j\vep}-u}{\min_{s\in G}[X_s-u\mu(s)]}\right|
<1\right| \min_{t\in  [a,b]}X_t>u\right)\\
&&-\P\left(\left.\min_{s\in G}[X_s-u\mu(s)]<0\right|\min_{t\in
   [a,b]}X_t>u \right) \to 1\,.
\end{eqnarray*}
Therefore, 
\eqref{t4.new.eq1} follows. The fact that $\mu(s)>1$ for all $s\in[a,b]\setminus E$ with an appeal to Theorem \ref{t3} implies \eqref{t4.new.claim1}.

In order to prove \eqref{t4.new.claim2}, fix $x_1,\ldots,x_k>0$.  The
same argument as in \eqref{t4.new.eq2} gives us 
\begin{eqnarray*}
&&\lim_{u\to\infty}\P\left( u(M_{1\vep}-u)>x_1,\ldots,u(M_{k\vep}-u)>x_k\Big|
   \min_{t\in [a,b]}X_t >u\right)\\
&=&\frac{1}{\E W}\P\left(E_j-W_j>x_j\,,1\le j\le k, \,  \min_{j=k+1,\ldots, l}Z_{t_j}>0
\right)\\
 &=&\P\left(E_j>x_j\,,1\le j\le k\right)\,,
\end{eqnarray*}
the last equality following by first conditioning on $(Z_t:t\in\bbr)$
and then using the memoryless property of $E_1,\ldots,E_k$. Thus
\eqref{t4.new.claim2} follows. 
\end{proof}

\begin{proof}[Proof of Theorem \ref{t4}]
By \eqref{t4.new.claim2}, \eqref{t4.new.eq3}, and the  fact that
$\mu(s)>1$ for all $s\in[a,b]\setminus E$ we conclude that for $x>0$, 
$$
\P\left( u\bigl( \min_{t\in [a,b]}X_t-u\bigr)>x|\Big| \min_{t\in
    [a,b]}X_t>u\right) \to e^{-(\theta_1+\ldots+\theta_k)x}\,.
$$
By \eqref{t.q1.q2}, the claim of the theorem follows. 
\end{proof}

\begin{proof}[Proof of Theorem \ref{t5}]
By \eqref{t4.new.claim1}, for each $j=1,\ldots, k$, 
\begin{align*}
\lim_{u\to\infty} & \, \P\left(T_{*}=t_j
                    \Bigr|\min_{s\in[a,b]}X_s>u\right) \\
\to & \, \P\bigl( E_j=\min(E_1,\ldots,
E_k)\bigr)=\frac{\theta_j}{\theta_1+\ldots+\theta_k}\,, 
\end{align*}
so the claim of  the theorem will follow once we check that the
measure 
$$
\hat\nu = \sum_{i=1}^k \frac{\theta_i}{\theta_1+\ldots+\theta_k}
\delta_{t_i}
$$
coincides with $\nu_*$. However, by the definition
\eqref{e:deftheta} of the vector $\theta$, the vector
$\Sigma\theta$ has identical positive components. It follows from
Theorem 4.3 (ii) in \ams\ that the measure $\hat\nu$ is optimal
for the minimization problem
$$
\min_{\nu\in M_1\{ t_1,\ldots, t_k\}}\int_{\{ t_1,\ldots,
  t_k\}}\int_{\{ t_1,\ldots, t_k\}}  R(s,t)\nu(ds)\nu(dt). 
$$
The measure $\nu_*$ is also optimal for this problem since it is
optimal for \eqref{p1.eq1}. That is, $\hat\nu$ is optimal for
\eqref{p1.eq1} as well and, since the latter problem has a unique
minimizer, $\hat\nu=\nu_*$. 
\end{proof}

\section*{Acknowledgment} The authors gratefully acknowledge the comments of two anonymous referees, which helped in improving the paper.


\begin{thebibliography}{9}
\expandafter\ifx\csname natexlab\endcsname\relax\def\natexlab#1{#1}\fi

\bibitem[Adler et~al.(2014)Adler, Moldavskaya and
  Samorodnitsky]{adler:moldavskaya:samorodnitsky:2014}
{\sc R.~J. Adler, E.~Moldavskaya {\rm and} G.~Samorodnitsky} (2014): On the
  existence of paths between points in high level excursion sets of {G}aussian
  random fields.
\newblock {\em Annals of Probability\/} 42:1020--1053.

\bibitem[Adler and Taylor(2007)]{adler:taylor:2007}
{\sc R.~J. Adler {\rm and} J.~E. Taylor} (2007): {\em Random Fields and
  Geometry\/}.
\newblock Springer, New York.

\bibitem[Aza\"is and Wschebor(2009)]{azais:wschebor:2009}
{\sc J.~Aza\"is {\rm and} M.~Wschebor} (2009): {\em Level Sets and Extrema of
  Random Processes and Fields\/}.
\newblock Wiley, Hoboken, N.J.

\bibitem[Berman and K{\^o}no(1989)]{berman:kono:1989}
{\sc S.~Berman {\rm and} M.~K{\^o}no} (1989): The maximum of a Gaussian process
  with nonconstant variance: a sharp bound for the distribution tail.
\newblock {\em Annals of Probability\/} 17:632--650.

\bibitem[Dudley(1973)]{dudley:1973}
{\sc R.~Dudley} (1973): Sample functions of the {G}aussian process.
\newblock {\em Annals of Probability\/} 1:66--103.

\bibitem[Guliashvili and Tankov(2016)]{guliashvili:tankov:2016}
{\sc A.~Guliashvili {\rm and} P.~Tankov} (2016): Tail behavior of sums and
  differences of log-normal random variables.
\newblock {\em Econometric reviews\/} 22:444--493.

\bibitem[Piterbarg(1996)]{piterbarg:1996}
{\sc V.~Piterbarg} (1996): {\em Asymptotic Methods in the Theory of Gaussian
  Processes and Fields\/}, volume 148 of {\em Translations of Mathematical
  Monographs\/}.
\newblock American Mathematical Society, Providence, RI.

\bibitem[Samorodnitsky and Shen(2013)]{samorodnitsky:shen:2013}
{\sc G.~Samorodnitsky {\rm and} Y.~Shen} (2013): Is the location of the
  supremum of a stationary process nearly uniformly distributed?
\newblock {\em Annals of Probability\/} 41.

\bibitem[Talagrand(1987)]{talagrand:1987}
{\sc M.~Talagrand} (1987): Regularity of {G}aussian processes.
\newblock {\em Acta Mathematica\/} 159:99--149.

\end{thebibliography}

\end{document}